\documentclass[leqno]{amsart}
\usepackage{amsmath,amssymb,latexsym,amsfonts,mathrsfs}
\usepackage{amscd}
\usepackage{amsthm}
\usepackage{bm}
\usepackage[all]{xy}
\usepackage{verbatim}
\usepackage{enumerate}
\allowdisplaybreaks

\newtheorem{mytheorem}{Theorem}[section]
\newtheorem{prop}[mytheorem]{Proposition}
\newtheorem{lem}[mytheorem]{Lemma}
\newtheorem{cor}[mytheorem]{Corollary}

\theoremstyle{definition}
\newtheorem{thm}{Theorem}[section]
\newtheorem{rem}[mytheorem]{{Remark}}

\def\dbar{\mbox{\setbox0=\hbox{$d$}$d$\kern-.75\wd0\vbox{%
\hrule height.1ex width.80\wd0\kern1.2ex}}}
\newcommand{\R}{\mathbb{R}}

\newcommand{\N}{\mathbb{N}}
\newcommand{\Z}{\mathbb{Z}}

\newcommand{\CAL}[1]{{\mathcal #1}}

\newcommand{\Sh}{\CAL{S}}
\newcommand{\ep}{\varepsilon}

\newcommand{\la}{\lambda}

\renewcommand{\labelenumi}{(\roman{enumi})}
\numberwithin{equation}{section}

\newtheorem{ass}[mytheorem]{Assumption}

\newtheorem{definition}[mytheorem]{Definition}

\title[Wave front set of solutions to magnetic Schr\"{o}dinger equations]
{Wave front set of solutions to Schr\"{o}dinger equations with time-dependent magnetic fields}
\date{}
\author[F.Abe]{Fumihito Abe}
\address[F.Abe]{J Institute Co., Ltd., 2-1-1 Shibuya,
Shibuya-ku, Tokyo, 150-0002, Japan}
\email[F.Abe]{fumihito.abe@jprep.jp}
\author[R.Muramatsu]{Ryo Muramatsu}
\address[R.Muramatsu]{Department of Mathematics, Faculty of Science Division I, Tokyo University of Science, Kagurazaka 1-3, Shinjuku-ku, Tokyo, 162-8601, Japan}
\email[R.Muramatsu]{rmuramatsu@rs.tus.ac.jp}

\begin{document}
\maketitle

\begin{abstract}
In this paper, we determine the wave front set of solutions to the Schr\"{o}dinger equation with time-dependent magnetic fields.
We considered time-dependent and `not so small' magnetic fields through the method using the wave packet transform established by K. Kato, M. Kobayashi and S. Ito \cite{KI, KKI2, KKI3}.
Furthermore,
we checked that the fundamental solution of the Schr\"odinger equation in a spatially decaying magnetic field has no singularities as a consequence of our result.
\end{abstract}

\section{Introduction}
\par
The aim of this paper is to determine the wave front sets of solutions of the following initial value problem for Schr\"{o}dinger equation with time-dependent magnetic fields;
\begin{align}\label{eq1}
\begin{cases}
i \partial _t u(t,x) + \frac{1}{2}(\nabla-\bm{a}(t,x))^2u(t,x)=0,\quad (t, x)\in \R\times \R^n, \\
u(0,x) = u_0 (x),\quad x\in \R^n,
\end{cases}
\end{align}
and to give some applications, 
where
$n\geq 2$, $i=\sqrt{-1}$, $u(t,x)$ is a complex-valued unknown function of $(t,x)\in \R\times \R^n$, $u_0(x)$ is a complex-valued given function of $x\in{\mathbb R}^n$, $\partial_t u=\partial u/\partial t$, $\partial_{x_j} u = \partial u/\partial x_j\ (j=1,\ldots ,n)$ and $\nabla=(\partial_{x_1},\ldots,\partial_{x_n})$.
\par
Throughout this paper, we assume that $\bm{a}(t,x)$ satisfies the following Assumption.
\begin{ass}\label{A2}
For $j=1,\ldots, n$, $j$-th component $a_j(t, x):\mathbb{R}\times\mathbb{R}^n\rightarrow \mathbb{R}$ of $\bm{a}(t,x)$  is a $C^{\infty}$-function with respect to $(t,x)$ and satisfies that there exists $\rho<1$ such that for any $\alpha \in \mathbb{Z}^n_{+}$, there exists a constant $C_{\alpha}>0$ satisfying
\begin{align}\label{GC}
\max_{1\leq j \leq n}|\partial_x^{\alpha}a_j(t, x)|\leq C_{\alpha} \langle x\rangle^{\rho - |\alpha|},\quad (t,x)\in\mathbb{R}\times\mathbb{R}^n
\end{align}
where, $\langle x\rangle=(1+|x|^2)^{1/2}$.
\end{ass}
We note the magnetic field $B(t,x)$ corresponds to the $n\times n$-matrix $(\partial_{x_j}a_k(t,x)-\partial_{x_k}a_j(t,x))_{1\leq j,k\leq n}$.
\par
To state our results, we define some notations.
Let $\CAL{S}(\R^n)$ be the space of rapidly decreasing smooth functions on $\R^n$ and $\CAL{S}'(\R^n)$ be its dual space.
\begin{definition}[Wave packet transform]
Let $\varphi(x)\in \CAL{S}(\mathbb{R}^n)\setminus \{0\}$ and $f \in \CAL{S} '(\mathbb{R}^n)$. The wave packet transform $W_{\varphi}f$ of $f$ with the basic wave packet $\varphi$ is defined by
\begin{align*}
W_{\varphi}f(x, \xi)\ =\ \int_{{\mathbb R}^n} \overline{\varphi(y-x)}f(y)e^{-iy\cdot \xi}dy,\quad (x, \xi) \in{\mathbb R}^n\times{\mathbb R}^n.
\end{align*}
\end{definition}
\par
For $\varphi(x) \in \mathcal{S}\left(\mathbb{R}^{n}\right) \backslash\{0\}$ and $0<b<1$, we put $\varphi_{\lambda}(x)=\lambda^{n b / 2} \varphi\left(\lambda^{b} x\right)$.
We write $\widehat{f}(\xi)=\CAL{F}[f](\xi)=(2\pi)^{-n/2}\int_{\R^n}f(x)e^{-ix\xi}dx$ for $f\in\CAL{S}'$.
For $\xi_{0} \in \mathbb{R}^{n} \backslash\{0\}$, a neighborhood $\Gamma$ of $\xi_{0}$ in $\mathbb{R}^{n}$ is called a conic neighborhood of $\xi_{0}$ if $\xi \in \Gamma$ and $\alpha>0$ imply $\alpha \xi \in \Gamma$.
\begin{definition}[wave front set]
    For $f \in \mathcal{S}^{\prime}\left(\mathbb{R}^{n}\right)$, we say $\left(x_{0}, \xi_{0}\right) \notin W F(f)$ if there exist a function $\chi(x)$ in $C_{0}^{\infty}\left(\mathbb{R}^{n}\right)$ with $\chi\left(x_{0}\right) \equiv 1$ near $x_{0}$ and a conic neighborhood $\Gamma$ of $\xi_{0}$ such that for all $N \in \mathbb{N}$ there exists a positive constant $C_{N}$ satisfying
    $$
    |\widehat{\chi f}(\xi)| \leq C_{N}(1+|\xi|)^{-N}
    $$
    for all $\xi \in \Gamma$.
    \end{definition}    

For $\lambda \geq 1$, $t_{0} \in \mathbb{R}$ and $(x, \xi) \in \mathbb{R}^{n} \times \mathbb{R}^{n}$, $x(s)=x\left(s ; t_{0}, x, \lambda \xi\right)$ and $\xi(s)=\xi\left(s ; t_{0}, x, \lambda \xi\right)$ denote solutions to
\begin{align}\label{ce}
\begin{cases}
\dot{x}(s)=(\nabla_{\xi}h)(s, x(s), \xi(s)), & x(t_0) = x,\\
\dot{\xi}(s)=-(\nabla_{x}h)(s, x(s), \xi(s)), & \xi(t_0) = \la\xi,
\end{cases}
\end{align}
where $h(t,x,\xi)=\frac{1}{2}\left|\xi-\bm{a}(t,x)\right|^2$ and $\dot{X}(s)=\frac{d}{ds}X(s)$.
\par
Our purpose in this study is to characterize the wave front sets of  solutions of the initial value problem \eqref{eq1} by the initial data.
The following theorem is our main result.
\begin{thm}\label{mt1}
Suppose $\bm{a}(t,x)$ satisfies Assumption~\ref{A2}. 
Let $b=\min\left(\frac{1}{8}, \frac{1-\rho}{8}\right)$.
Then for any $u_0(x)\in L^2(\R^n)$, $\varphi_0(x)\in \CAL{S}(\mathbb{R}^n)\setminus \{0\}$, the solution $u(t,x)\in C(\R;L^2(\R^n))$ of 
\eqref{eq1} and $t_0\in\R$, the following conditions are equivalent:
\renewcommand{\labelenumi}{(\roman{enumi})}
\begin{enumerate}
    \item $(x_0,\xi_0)\notin WF(u(t_0))$.
    \item There exist a neighborhood $K$ of $x_0$ and a conic neighborhood $\Gamma$ of $\xi_0$ such that for all $N\in\N$, $a\geq1$ and $\varphi_0(x)\in\Sh(\R^n)\setminus\{0\}$, there exists a constant $C_{N,a,\varphi_0}>0$ satisfying the inequality
    \begin{equation}\label{main}
      \left|W_{\varphi_{\la}^{(-t_0)}}u_0(x(0;t_0,x,\la\xi),\xi(0;t_0,x,\la\xi))\right|\leq C_{N,a,\varphi_0}\la^{-N}
    \end{equation}
    for all $\la\geq1$, $x\in K$ and $\xi\in\Gamma_a:=\left\{\xi\in\Gamma\middle|a^{-1}\leq |\xi|\leq a\right\}$, where $W_{\varphi_{\la}^{(-t_0)}}u_0(x,\xi)$ denotes the wave packet transform of $u_0$ with a basic wave packet $\varphi_{\la}^{(-t_0)}(x)$ and, for $t\in\R$, $\varphi_{\lambda}^{(t)}(x)$ stands for $\varphi_{\lambda}^{(t)}(x)=e^{it\Delta/2}\varphi_{0,\lambda}(x)$.
    \item There exist $\varphi_0(x)\in\Sh(\R^n)\setminus\{0\}$, a neighborhood $K$ of $x_0$ and a conic neighborhood $\Gamma$ of $\xi_0$ such that for all $N\in\N$, $a\geq1$, there exists a constant $C_{N,a,\varphi_0}>0$ satisfying the inequality \eqref{main} for all $\la\geq1$, $x\in K$ and $\xi\in\Gamma_a$.
\end{enumerate}
\end{thm}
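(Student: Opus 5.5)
Our plan follows the Kato--Kobayashi--Ito scheme. We use two ingredients: a characterization of the wave front set through the wave packet transform, and a representation of $W_{\varphi^{(t)}_\la}u(t)$ obtained by transporting $u_0$ along the Hamilton flow \eqref{ce}.

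\textbf{Step 1: wave packet characterization of $WF$.} First we recall the result of Kato--Kobayashi--Ito (Folland-type). For $f\in L^2$, $(x_0,\xi_0)\notin WF(f)$ holds iff there are a neighborhood $K$ and a conic neighborhood $\Gamma$ such that $|W_{\varphi_\la}f(x,\la\xi)|\le C_{N,a}\la^{-N}$ for $x\in K$, $\xi\in\Gamma_a$, $\la\ge1$. Moreover, ``for some $\varphi$'' and ``for all $\varphi$'' are equivalent, provided $0<b<1$. We apply this at $t=t_0$ with basic wave packet $\varphi_0$. Thus (i) is equivalent to decay of $W_{\varphi_{0,\la}}u(t_0)(x,\la\xi)$, uniformly in one or in all $\varphi_0$.

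\textbf{Step 2: transformed equation and integral representation.} We apply $W_{\varphi^{(t-t_0)}_\la}$ to \eqref{eq1}. Here $\varphi^{(t)}_\la=e^{it\Delta/2}\varphi_{0,\la}$, so $\varphi^{(0)}_\la=\varphi_{0,\la}$ at $t=t_0$. Since $\varphi^{(t)}$ evolves by the free flow, the principal part commutes. After Taylor-expanding $\bm a(t,y)$ around $x$ up to first order, $F(t,x,\xi)=W_{\varphi^{(t-t_0)}_\la}u(t)(x,\xi)$ satisfies a first-order transport equation,
\begin{equation*}
\bigl(i\partial_t+i\nabla_\xi h\cdot\nabla_x-i\nabla_x h\cdot\nabla_\xi-\tilde h\bigr)F=RF,
\end{equation*}
where $\tilde h$ is a real scalar potential term and $R$ collects the remainder. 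The remainder involves the second-order Taylor terms $\partial^2 a(t,x+\theta(y-x))(y-x)^2$ acting with the window. The method of characteristics along \eqref{ce}, starting from $(x(0;t_0,x,\la\xi),\xi(0;t_0,x,\la\xi))$, then gives
\begin{equation*}
F(t_0,x,\la\xi)=e^{i\Phi}W_{\varphi^{(-t_0)}_\la}u_0\bigl(x(0),\xi(0)\bigr)-i\int_0^{t_0}e^{i\Phi(s)}(RF)(s,x(s),\xi(s))\,ds,
\end{equation*}
with a real phase $\Phi$.

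\textbf{Step 3: controlling the flow and the remainder.} This is the main obstacle. Under Assumption~\ref{A2} with $\rho<1$, for $|s|\le|t_0|$ we need $|\xi(s)-\la\xi|\lesssim \la^{\rho}$-type bounds, $|x(s)|\sim\la|s|$, and bounds on the derivatives of the flow. These follow by Gronwall, since $\dot\xi=(\nabla a)^T(\xi-a)$ grows at most like $\langle x\rangle^{\rho-1}|\xi|$. The remainder $RF$ is bounded by $\la^{-(1-\rho)\cdot\text{something}+cb}$ times a wave packet transform of $u(s)$, with window $(y-x)^\alpha\varphi^{(s-t_0)}_\la$. The window has width $\la^{-b}$, but the free evolution spreads it to width $\sim\la^{-b}+|s|\la^{b}$. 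The choice $b\le\min(1/8,(1-\rho)/8)$ is exactly what guarantees that each iteration gains a fixed power $\la^{-\delta}$. Since the remainder is again of the same type, with modified windows in $\Sh$ and seminorms controlled uniformly, we iterate the integral representation $N/\delta$ times. The terms with a wave packet of $u_0$ evaluated near the flowed point are kept, and the rest is $O(\la^{-N})$ by $L^2$ conservation. We must also check that the flowed neighborhoods stay in neighborhoods of $(x(0),\xi(0))$ with comparable conic structure; this is where shrinking $K,\Gamma$ is needed.

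\textbf{Step 4: conclusion.} Taking absolute values, $|F(t_0,x,\la\xi)|$ and $|W_{\varphi^{(-t_0)}_\la}u_0(x(0),\xi(0))|$ differ by $O(\la^{-N})$ plus finitely many analogous terms with other windows. The direction (iii)$\Rightarrow$(i) uses the representation read backward from time $0$ to $t_0$; equivalently, we use the reversibility of the flow and of the equation. Then (i)$\Rightarrow$(ii) holds because Step~1 gives decay for all windows, and (ii)$\Rightarrow$(iii) is trivial.
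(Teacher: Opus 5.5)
Your overall architecture matches the paper's: the Kato--Kobayashi--Ito characterization at $t=t_0$, the transport representation \eqref{fo} along \eqref{ce}, and a bootstrap that starts from $L^2$ conservation and gains a fixed power of $\la$ per step. Iterating the integral representation is just the unfolded form of the paper's induction $P(\sigma)\Rightarrow P(\sigma+2b)$ along one characteristic.

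\textbf{The gap is in the remainder.} You expand $\bm a(t,y)$ only to first order and keep the integral remainder, whose coefficient is $c_j(y)=\int_0^1(1-\theta)\,\partial^2a_j(t,x+\theta(y-x))\,d\theta$. Because $c_j$ depends on $y$, the term $\xi^{\ast}_{s,j}\int u(s,y)e^{-iy\cdot\xi^{\ast}_s}c_j(y)(y-x^{\ast}_s)^2\overline{\varphi^{(s^{\ast})}_\la(y-x^{\ast}_s)}\,dy$ is not a multiple of a wave packet transform of $u(s)$ at $(x^{\ast}_s,\xi^{\ast}_s)$ with window $(y-x)^\alpha\varphi^{(s^{\ast})}_\la$. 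You cannot pull $\sup|\partial^2 a|$ out of an oscillatory integral. Putting absolute values on $u$ gives only an $L^2$ bound, with no inherited factor $\la^{-\sigma}$. So ``the remainder is again of the same type'' is false as stated, and the iteration does not close.

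The paper's remedy is to Taylor expand to a large order $L$.
\begin{itemize}
\item The polynomial part $R_1$ (orders $2,\dots,L-1$) has $y$-independent coefficients $\partial^\alpha a_j(s,x^{\ast}_s)$. By Lemma~\ref{L3} it becomes a finite sum of $W_{(x^{\gamma'}\partial^{\beta'}\varphi_0)^{(s^{\ast})}_\la}u(s)(x^{\ast}_s,\xi^{\ast}_s)$, to which the inductive hypothesis applies (Lemma~\ref{aj}). The gain $\la^{-2b}$ appears only after integrating in $s$. It uses $\int|\xi^{\ast}_s|\langle x^{\ast}_s\rangle^{-1-\ep}ds\le C$ (Lemma~\ref{L2}) together with $|x^{\ast}_s|\gtrsim\la|s^{\ast}|$ for $|s^{\ast}|\ge\la^{-2b}$ (Lemma~\ref{L1}). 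Pointwise in $s$, near $s=t_0$ the prefactor $|\xi^{\ast}_s|\sim\la$ is not beaten.
\item The genuinely non-polynomial remainder $R_2$ of order $L$ is shown to be $O(\la^{-k})$ outright, with no microlocal input (Lemma~\ref{R2}). One inserts $u=W^{\ast}Wu/\|\varphi\|^2$, cuts $|y-x^{\ast}_s|$ at scale $\la^{-d}(1+\la|s^{\ast}|)$, integrates by parts in $y$, and uses $L^2$ conservation. The high power $(y-x^{\ast}_s)^\alpha$ with $|\alpha|=L$ supplies the decay.
\end{itemize}
You need either this high-order splitting or some other way to turn the $y$-dependent coefficient into windows with uniformly controlled seminorms. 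Your proposal has neither.

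Two further points. First, your iteration, like the paper's use of Lemma~\ref{aj}, produces terms $W_{\tilde\varphi^{(-t_0)}_\la}u_0(x(0),\xi(0))$ with modified windows $\tilde\varphi=x^{\gamma}\partial^{\beta}\varphi_0$. So it consumes decay for all windows, i.e.\ hypothesis (ii). Under (iii) you have one window only, and Step~4 does not say how the ``finitely many analogous terms with other windows'' are controlled. The paper's write-up is equally terse here, since Lemma~\ref{aj} assumes the bound for every $\varphi_0$.

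Second, (i)$\Rightarrow$(ii) does not follow from Step~1 alone. Step~1 gives decay of $W_{\varphi_{0,\la}}u(t_0)$, whereas (ii) concerns $u_0$. You must run the whole transport-plus-remainder argument from $t_0$ back to $0$. Conversely, the representation exactly as you wrote it (data at $0$ feeding $F(t_0)$) is the forward reading, and that is what (iii)$\Rightarrow$(i) uses.
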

Invoking the result and method of Kato--Ito~\cite{KI}, we can extend our result to the case with a scalar potential perturbation term:
 \begin{cor}
	Let $\bm{a}(t,x)$ satisfy Assumption~\ref{A2}, $u(t,x)\in C(\R;L^2(\R^n))$ be the solution of the equation
	\begin{align}\label{eq2}
\begin{cases}
i \partial _t u(t,x) + \frac{1}{2}(\nabla-\bm{a}(t,x))^2u(t,x)=V(t,x)u(t,x),\quad (t, x)\in \R\times \R^n, \\
u(0,x) = u_0 (x),\quad x\in \R^n,
\end{cases}
\end{align}
with $u_0(x)\in L^2(\R^n)$ and $V(t,x)\in C^{\infty}(\R\times\R^n)$ satisfy that there exists $\mu<2$ such that for any $\alpha \in \mathbb{Z}^n_{+}$, there exists a constant $C_{\alpha}>0$ satisfying
\begin{align*}
|\partial_x^{\alpha}V(t, x)|\leq C_{\alpha} \langle x\rangle^{\mu - |\alpha|},\quad (t,x)\in\mathbb{R}\times\mathbb{R}^n.
\end{align*}
Then the equivalence of all conditions of Theorem~\ref{mt1} still holds.
 \end{cor}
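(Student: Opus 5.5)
The plan is to rerun the proof of Theorem~\ref{mt1} with the Hamiltonian unchanged, $h(t,x,\xi)=\frac12|\xi-\bm{a}(t,x)|^2$, so that the bicharacteristics \eqref{ce} in condition (ii) are the same as before. The new scalar potential term $V(t,x)u$ is then treated as a perturbation inside the wave packet transform framework. The argument for Theorem~\ref{mt1}, following \cite{KI, KKI2, KKI3}, has three stages: apply $W_{\varphi^{(t)}_\la}$ to the equation; integrate the resulting first order transport equation along $(x(s),\xi(s))$; and estimate the remainder terms by repeated integration by parts, which yields a gain of powers of $\la^{-1}$. We insert $V$ into this scheme in the same way Kato--Ito \cite{KI} handle a potential of growth $\langle x\rangle^{\mu}$, $\mu<2$.

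Concretely, we write $W_{\varphi^{(t)}_\la}(Vu)(t,x,\xi)$ and Taylor expand $V(t,y)$ around $y=x$:
\[
V(t,y)=V(t,x)+\nabla V(t,x)\cdot(y-x)+R_2(t,x,y).
\]
The term $V(t,x)$ contributes a scalar multiplier in the transformed equation, giving a phase factor $\exp\bigl(-i\int V(s,x(s))\,ds\bigr)$ of modulus one along the flow. Since $V$ is real, this factor does not affect \eqref{main}. The first order term is handled by writing $(y-x)\overline{\varphi(y-x)}$ as a new wave packet, and it is absorbed exactly as the linear part of the Taylor expansion of $\bm{a}$ was absorbed in the proof of Theorem~\ref{mt1}. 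For the remainder, the bound $|\partial^\alpha_x V|\le C\langle x\rangle^{\mu-|\alpha|}$ together with $|y-x|\lesssim\la^{-b}$ on the essential support of $\varphi_\la$ gives $R_2=O(\langle x\rangle^{\mu-2}\la^{-2b})$, modulo rapidly decaying tails.

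The step I expect to be the main obstacle is compatibility of this remainder with the time-dependent basic wave packet $\varphi^{(t)}_\la=e^{it\Delta/2}\varphi_{0,\lambda}$. This packet spreads like $\la^{-b}+|t|\la^{b}$, so over bounded time intervals the effective width is still $O(\la^{b})$ rather than $O(\la^{-b})$. The estimate of $R_2$ must therefore be arranged so that the growth $\langle x(s)\rangle^{\mu-2}$ with $\mu<2$ gives decay in $|x(s)|$, since $|x(s)|\sim\la|s|$ for $s\ne t_0$. This decay should compensate the spreading and yield an overall gain $\la^{-\delta}$ for some $\delta>0$, possibly after shrinking $b$. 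I would check this using the same weighted bounds for the classical trajectories, namely $|x(s)-x-(s-t_0)\la\xi|\lesssim \la^{\rho}$-type estimates, that were established for the magnetic flow under Assumption~\ref{A2}. Near $s=t_0$, where $x(s)$ stays bounded, the remainder is only $O(\la^{-2b}\cdot\la^{2b})=O(1)$, so the gain must come from the short length of that time interval. The plan is to bound $\int_{|s-t_0|<\la^{-1+\epsilon}}\!ds$ directly.

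Once the single-step gain is in place, the iteration argument of Theorem~\ref{mt1} proceeds unchanged. This gives (i)$\Rightarrow$(ii) and (iii)$\Rightarrow$(i), using the characterization of $WF$ by wave packet decay at fixed time $t_0$ together with time-reversibility of the equation. The implication (ii)$\Rightarrow$(iii) is trivial.
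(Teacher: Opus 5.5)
\textbf{There is a genuine gap: you keep the bicharacteristics of $h=\frac12|\xi-\bm a|^2$, and that is where the plan breaks.}

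The first-order Taylor term of $V$ produces, through $\int (y-x)\overline{\varphi(y-x)}u(y)e^{-iy\cdot\xi}dy=(i\nabla_\xi-x)W_\varphi u(x,\xi)$, the operator $\nabla_xV(t,x)\cdot(i\nabla_\xi-x)$ acting on $W_{\varphi^{(t)}}u$. You say you absorb it ``exactly as the linear part of $\bm a$'', i.e.\ like the term $\nabla_x(\xi\cdot\bm a)\cdot(i\nabla_\xi-x)$ in \eqref{2}. Doing that puts $-\nabla_xV$ into the equation for $\dot\xi$. The characteristics then become those of $h+V$, and $\Psi$ acquires $-V+\nabla_xV\cdot x$. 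So the two halves of your plan are incompatible.

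Suppose instead you keep $h$ and treat this term as a remainder with the new window $x\varphi_0$. Lemmas~\ref{L3} and \ref{L1} give only the bound $\langle x^\ast_s\rangle^{\mu-1}(\la^{-b}+|s^\ast|\la^{b})\la^{-\sigma}$. For $|s^\ast|\sim1$ this is of size $\la^{\mu-1+b-\sigma}$, which gives no gain once $\mu\ge1-b$. Hence the induction step $P(\sigma)\Rightarrow P(\sigma+\delta)$ fails.

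\textbf{This is not a removable technicality: with the unchanged flow the equivalence is false.} Take $\bm a\equiv0$, $V=F\cdot x$ with $F\ne0$ (so $\mu=1$), and $t_0\ne0$. The Avron--Herbst formula is $u(t,x)=e^{-itF\cdot x-it^3|F|^2/6}(e^{it\Delta/2}u_0)(x+\tfrac{t^2}{2}F)$. It shows that $WF(u(t_0))$ is $WF(e^{it_0\Delta/2}u_0)$ translated by $-\tfrac{t_0^2}{2}F$. But condition (ii) with the free flow $(x-t_0\la\xi,\la\xi)$ characterizes $WF(e^{it_0\Delta/2}u_0)$ itself, by Theorem~\ref{mt1} with $\bm a\equiv0$. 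Now choose $u_0$ with $e^{it_0\Delta/2}u_0=\chi(x-x_0)|x-x_0|^{-1/2}$. At a point $(x_0,\xi_0)$, condition (i) holds and (ii) fails.

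\textbf{How to repair it.} The corollary must be read, as in Kato--Ito, with $x(s),\xi(s)$ the bicharacteristics of $\frac12|\xi-\bm a(t,x)|^2+V(t,x)$; this is what the paper's appeal to the result and method of \cite{KI} amounts to. Two pieces of work are then needed.
\begin{enumerate}
\item Re-prove Lemmas~\ref{L1} and \ref{L2} for this flow. Here $|\nabla_xV(s,x(s))|\lesssim(\la|s^\ast|)^{\mu-1}$ keeps the momentum deviation $o(\la)$, because $\mu<2$.
\item Treat only the Taylor remainder of order $\ge2$ of $V$ perturbatively. For that term your balance of $(\la|s^\ast|)^{\mu-2}$ against $(\la^{-b}+|s^\ast|\la^b)^2$ is the right computation. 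It forces $b$ to be small in terms of $2-\mu$ as well.
\end{enumerate}
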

 
As a main application of our main result, we conclude that  the fundamental solution of the \eqref{eq1} with the magnetic field $B(t,x)$ decaying
at infinity has no singularities except at the initial time.
\begin{cor}[Absence of singularities of the fundamental solution]\label{cor1}\hfill\\
Let $H(t)=\frac{1}{2}(\nabla-i\bm{a}(t,x))^2$ with $\bm{a}(t,x)$ satisfying Assumption \ref{A2} and $E(t,x,0)$ denote the fundamental solution of \eqref{eq1}, namely it is the distributional solution of \eqref{eq1} with initial data $\delta_0(x)$.
Then
$WF(E(t_0,\cdot,0))=\emptyset$ for $t_0\neq0.$
\end{cor}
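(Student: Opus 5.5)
We prove Corollary~\ref{cor1} by applying the implication (iii)$\Rightarrow$(i) of Theorem~\ref{mt1} to the fundamental solution. Fix $t_0\neq0$ and an arbitrary $(x_0,\xi_0)$ with $\xi_0\neq0$. We must show that for some $\varphi_0$, some neighborhood $K$ of $x_0$ and some conic neighborhood $\Gamma$ of $\xi_0$,
\[
\left|W_{\varphi_\la^{(-t_0)}}\delta_0\bigl(x(0;t_0,x,\la\xi),\xi(0;t_0,x,\la\xi)\bigr)\right|\le C_{N,a}\la^{-N}
\]
uniformly for $x\in K$ and $\xi\in\Gamma_a$. Two preliminary points need care. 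First, $\delta_0\notin L^2$. We handle this either by noting that the proof of Theorem~\ref{mt1} only uses $L^2$ through the $L^2$-boundedness of the propagator and duality, so it extends to $u_0\in H^{-s}$ since the magnetic propagator is bounded on weighted Sobolev spaces. Alternatively, we write $\delta_0=(1-\Delta)^{s}g$ with $g\in L^2$ and commute the powers of $(1-\Delta)$ through the evolution, modulo smoothing errors. I would take the first route and only remark that the argument of Theorem~\ref{mt1} is local in phase space. Second, the wave packet transform of $\delta_0$ is explicit:
\[
W_{\varphi_\la^{(-t_0)}}\delta_0(X,\Xi)=\overline{\varphi_\la^{(-t_0)}(-X)}.
\]
So everything reduces to a decay estimate on $\varphi^{(-t_0)}_\la=e^{-it_0\Delta/2}\varphi_{0,\la}$ at the point $-x(0;t_0,x,\la\xi)$.

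\textbf{Step 1: explicit form of the free evolution.} Take $\varphi_0(x)=e^{-|x|^2/2}$. Then $\varphi_{0,\la}$ is a Gaussian of width $\la^{-b}$, and $e^{-it_0\Delta/2}\varphi_{0,\la}$ is again an explicit Gaussian,
\[
\varphi^{(-t_0)}_\la(y)=c_\la(t_0)\exp\Bigl(-\frac{|y|^2}{2(\la^{-2b}-it_0)}\Bigr).
\]
Its modulus is $\lesssim \la^{nb/2}\exp\bigl(-c|y|^2\la^{-2b}/(\la^{-4b}+t_0^2)\bigr)$, so for $t_0\neq0$ and $\la\ge1$ it is bounded by $C\la^{nb/2}\exp(-c_{t_0}\la^{-2b}|y|^2)$. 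Hence it is $O(\la^{-N})$ as soon as $|y|\ge\la^{b+\epsilon}$ for some $\epsilon>0$.

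\textbf{Step 2: the classical trajectory escapes.} We need $|x(0;t_0,x,\la\xi)|\ge c\la$ for $\la$ large, uniformly for $x\in K$ and $\xi\in\Gamma_a$. Set $p(s)=\xi(s)-\bm a(s,x(s))$, so that $\dot x=p$. From \eqref{ce}, the kinetic momentum obeys the Lorentz-type equation
\[
\dot p = B(s,x)p-\partial_s\bm a(s,x).
\]
The magnetic term $B(s,x)p$ is orthogonal to $p$, so $\frac{d}{ds}|p|^2=-2p\cdot\partial_s\bm a$. This shows $|p(s)|$ stays comparable to $\la|\xi|$ on $[0,t_0]$ (or $[t_0,0]$), provided $\partial_t\bm a$ is controlled. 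Assumption~\ref{A2} only bounds $x$-derivatives, so here I would assume, or extract from the paper's preliminary estimates, that $\partial_t\bm a=O(\langle x\rangle^{\rho})$, as is presumably used in the proof of the main theorem.

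\textbf{Step 3: the direction is nearly constant.} This is the main obstacle. By the decay $|B|\lesssim\langle x\rangle^{\rho-1}$, the direction of $p$ is almost constant once $|x(s)|$ is large. Near $s=t_0$, where $x(s)$ is still bounded, we have to rule out that a strong field curls the trajectory back toward the origin. I would argue with a bootstrap over times $|s-t_0|\ge\la^{-1/2}$. Since $|\dot p|\lesssim\la$ while $|p|\sim\la$, the trajectory moves a distance $\sim\la|s-t_0|$ in a nearly straight line. After a time of order $\la^{-1}$ the position already satisfies $|x(s)|\gtrsim\la|s-t_0|$. From then on the rotation rate is $|B|\lesssim(\la|s-t_0|)^{\rho-1}$, and integrating gives a total angle $O(\la^{\rho-1})\to0$ because $\rho<1$. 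Therefore $|x(0)|\ge c|t_0|\la$. Combined with Step 1, this gives the required $O(\la^{-N})$ bound. Condition (iii) then holds at every $(x_0,\xi_0)$, and Theorem~\ref{mt1} yields $WF(E(t_0,\cdot,0))=\emptyset$.
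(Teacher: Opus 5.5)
Your skeleton matches the paper's: (iii)$\Rightarrow$(i) of Theorem~\ref{mt1} with a Gaussian $\varphi_0$, the identity $W_{\varphi_\lambda^{(-t_0)}}\delta_0(X,\Xi)=\overline{\varphi_\lambda^{(-t_0)}(-X)}$, and the reduction to $|x(0;t_0,x,\lambda\xi)|\gtrsim\lambda$. Step~1 is correct. Your exponent $\lambda^{-2b}|y|^2/(\lambda^{-4b}+t_0^2)$ is the right one; the paper's $e^{-|x(0)|^2/(2|\Lambda|)}$ with ``$|\Lambda|\sim\lambda^{-2b}$'' is a slip, since $|\Lambda|\to|t_0|$. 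Your version gives $O(\lambda^{-\infty})$ once $|x(0)|\gtrsim\lambda$.

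\textbf{The gap is in Steps~2--3.} By working with the kinetic momentum $p=\xi-\bm{a}(s,x)$ you bring in the induced electric field $-\partial_s\bm{a}$. Both the energy identity $\frac{d}{ds}|p|^2=-2p\cdot\partial_s\bm{a}$ and the rotation estimate then need $\partial_t\bm{a}=O(\langle x\rangle^{\rho})$; the rotation rate of $p/|p|$ is really bounded by $|B|+|\partial_s\bm{a}|/|p|$, not by $|B|$ alone. This bound is not part of Assumption~\ref{A2}, which controls only $x$-derivatives. It also cannot be extracted from the paper, because neither \eqref{2} nor the Hamilton system \eqref{ce} contains $\partial_t\bm{a}$.

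Under \eqref{GC} alone, $\partial_t\bm{a}$ can grow arbitrarily fast. Take $\bm{a}(t,x)=\sum_k\langle x_k\rangle^{\rho}\chi\bigl((x-x_k)/r_k\bigr)\sin(M_kt)\,e_1$ with $x_k=2^ke_1$, $r_k=2^{k-2}$, and $\chi\in C_0^\infty$ supported in the unit ball. It satisfies \eqref{GC} with constants independent of $(M_k)$, while $\partial_t\bm{a}(0,x_k)=\chi(0)\langle x_k\rangle^{\rho}M_ke_1$. So, as written, your argument proves the corollary only under an extra hypothesis.

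\textbf{The fix is to stay in canonical variables, as the paper's Lemma~\ref{L1} does.} The system $\dot x=\xi-\bm{a}(s,x)$, $\dot\xi=(\nabla_x\bm{a})^T(s,x)(\xi-\bm{a}(s,x))$ involves only $\bm{a}$ and $\nabla_x\bm{a}$.
\begin{enumerate}
\item Gronwall gives $\langle x(s)\rangle+|\xi(s)|\lesssim\lambda$ for $s$ between $0$ and $t_0$.
\item Assume the continuity hypothesis $\langle x(\tau)\rangle\ge c(1+\lambda|\tau-t_0|)$. Then $|\xi(s)-\lambda\xi|\lesssim\lambda\int\langle x\rangle^{\rho-1}d\tau+\int\langle x\rangle^{2\rho-1}d\tau=O(\lambda^{\max(\rho,0)}+\log\lambda)$, and $|\bm{a}(s,x(s))|\lesssim\lambda^{\max(\rho,0)}$.
\item Hence $x(s)=x+(s-t_0)\lambda\xi+|s-t_0|\,o(\lambda)$. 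For $c$ small (depending on $a$ and $K$) this improves the hypothesis, closes the bootstrap, and yields $|x(0)|\ge|t_0|\lambda/(4a)$ for large $\lambda$.
\end{enumerate}
The bound $|p|\sim\lambda$ and the near-constancy of the direction then follow automatically, and no rotation argument is needed.

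Your handling of $\delta_0\notin L^2$ is at the same sketch level as the paper's $\CAL{H}^k$ remark. In both, the base case $P(0)$ and the $\|W_{\varphi}u\|_{L^2_{z,\eta}}$ bound in Lemma~\ref{R2} must be replaced by polynomially growing weighted bounds. That is not where the real problem lies.
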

The variation of this proposition has already been shown in the paper of W. Craig, T. Kappeler and W. Strauss \cite[Theorem 1.9]{CKS} or as a corollary of K. Yajima's work \cite{Y0} of constructing the fundamental solution (see also \cite{Y1, Y2} for the case of scalar potentials). 
Our contribution is that we extended these results to cases where $\bm{a}(t,x)$ grows spatially and depends on $t$ without an exact representation of the fundamental solution and restricting the time interval sufficiently short.
\par
The wave front set was originally introduced for detecting singularities by directions, i.e.`\textit{microlocally}', by L. H\"ormander (see \cite{H2}). It has been used to  investigate phenomena of propagation of singularities for several pseudo-differential operators including hyperbolic operators of principal type initially.
\par
In last decades, various authors examined the propagation of singularities for Schr\"odinger operators, in which there are two aspects of the picture concurrently: the recurrence of singularities caused by trapping of classical orbits and the microlocal smoothing effect due to nontrapping situation.
The former is investigated in such as \cite{L}, \cite{Z}, \cite{KRY} and \cite{W} and the latter is in such as \cite{CKS}, \cite{Do2} and so on.
In particular, S. Doi in \cite{Do2} discovered that the smoothing effect of the Schr\"odinger operator on a $C^{\infty}$ manifold propagates along the geodesic flow.
\par
S. Nakamura has determined the wave front set of various types of Schr\"odinger equations, which include not only the free equation but also with variable coefficients, harmonic oscillators and their perturbed equations, in successive works \cite{N2}, \cite{MN} and \cite{N3}.
\par
There has been much research on the wave front set of the Schr\"odinger operator on the manifold, whose  geometric situation changes.
However, except for S. Mao's works, it seems there are very few results for the operator in magnetic fields, although magnetic fileds also give a large change for the geometric property of the classical orbit.
There are successive works \cite{M1} and \cite{M2}, in which S. Mao has studied the wave front set for the Schr\"odinger operator in (perturbed) magnetic fields.
His works stand on the point of view what there is recurrence of singularities for the case of constant magnetic fields and it is not affected by `small' perturbations.
As so far, there are no results for the case of general magnetic vector potentials such as of growing spatially and depending on time. 
We treated these type magnetic potentials even depending on time in this paper. 
\par
One of the novelties of our results is that we addressed this problem using the method developed by K. Kato, M. Kobayashi and S. Ito.
Their method adopts the wave packet transform introduced originally by A. C\'ordoba and C. Fefferman in \cite{CF}.
Inspired by \=Okaji's works \cite{O1} and \cite{O2}, they characterized the wave front set in terms of the wave packet transform in \cite{KKI3}. 
Then they combined this result and the representation of the solution of the Schr\"odinger equation also obtained by them (see \cite{KKI2} and  \cite{KKI4}), and  characterized the wave front set of the solution of the Schr\"odinger equation with harmonic oscillators in \cite{KKI1}, and  then with time-dependent sub-quadratic scalar potentials in \cite{KI}.
Later their results have been refined for perturbed harmonic oscillators \cite{KI2}.
This method has the advantage that allows us to treat time-dependent potentials and `long-range' potentials.
This is why we adopted their method.
\par
Finally, we will explain the meaning of assuming such a general assumption on the vector potential.
Assumption \ref{A2} implies that the corresponding magnetic field $B(t,x)$ is decaying spatially as $|B(t,x)|\lesssim \langle x\rangle^{\rho-1}$ at infinity.
Such $B(t,x)$ appears in various situations from the Biot--Savart law thus it may have some meanings to concern about the general spatially decaying magnetic fields.
For instance, an electric circular current makes the magnetic field $B(x)=(0,0,(1+x_3^2)^{-1})$ for $x=(x_1,x_2,x_3)\in\R^3$ at its central axis.
Also, the electric linear current makes
the magnetic field $B(t,x)=|x|^{-2}x^{\perp}$ ($x=(x_1,x_2)\in\R^2$, $x^{\perp}=(x_2,-x_1)$) on the plane.
We can regard that this magnetic field is also treated in this paper with the regularizing modification at the origin. 
\par
This paper is organized as follows. In Section \ref{sec2}, we introduce terminologies and preliminaries we need. Especially, we give the representation of the solution of \eqref{eq1} by wave packet transform and introduce some lemmas on characteristics corresponding to \eqref{eq1} in Section \ref{sec2}.
In Section \ref{sec3}, we prepare some key lemmas that play an important role in the proof of our theorem.
Then we will give a proof of Theorem \ref{mt1} in Section 4, and we will show Corollary \ref{cor1} in Section 5.
\par
\section{Preliminaries}\label{sec2}
\par
\subsection{Transform of equation \eqref{eq1}}
\renewcommand{\labelenumi}{(\theenumi)}
\par
\begin{definition}[Inverse wave packet transform]
Let $\varphi \in \CAL{S}(\mathbb{R}^n)\setminus \{0\}$ and $F\in\CAL{S} '(\mathbb{R}^{2n})$, we define the adjoint operator $W_{\varphi}^{*}$ of $W_{\varphi}$ by 
\begin{align*}
W_{\varphi}^{*}[F(y, \xi)](x)\ =\ \iint_{{\mathbb R}^{2n}} \varphi(x-y)F(y, \xi)e^{ix\cdot \xi}dy\dbar\xi,\quad x\in{\mathbb R}^n,
\end{align*}
where $\dbar\xi=(2\pi)^{-n}d\xi$.

Then, for $f\in{\CAL S}'({\mathbb R}^n)$, the following inversion fomula holds (see \cite[Corollary 11.2.7]{Gr});
\begin{align}\label{if}
f(x) = \frac{1}{\|\varphi\|_{L^2}^2}
W_{\varphi}^{*}[W_{\varphi}f](x).
\end{align}
\end{definition}
For the proof of Theorem \ref{mt1}, we prepare the representation of the solution $u(t,x)$ of \eqref{eq1} by wave packet transform.
\begin{lem} 
For $t\in\mathbb{R}$ and $x, \xi\in\mathbb{R}^n$, we define $x(s)=x(s; t, x, \xi)$ and $\xi(s)=\xi(s; t, x, \xi)$ as solutions of \eqref{ce}.
 If $\bm{a}(t,x)$ satisfies Assumption \ref{A2},
Then the solution $u(t, x)$ of \eqref{eq1} satisfies the following integral equation
 \begin{align}\label{fo}
 &W_{\varphi^{(t)}}u(t, x, \xi)\\
 &=e^{-i\int_{0}^{t}\Psi(s, x(s), \xi(s))ds}\bigg(W_{\varphi_0}u_{0}(x(0;t, x, \xi), \xi(0; t, x, \xi))\notag\\
 &\quad-i\int_0^te^{i\int_{0}^{\tau}\Psi(s, x(s), \xi(s))ds}R\left[\varphi^{(t)},u\right](\tau, x(\tau; t, x, \xi), \xi(\tau; t, x, \xi))d\tau\bigg),\notag
 \end{align}
 where $\varphi^{(t)}(x)=e^{it\Delta/2}\varphi_0(x)$, $\Psi(t, x, \xi)=-h(t,x,\xi)+\nabla_xh(t,x,\xi)\cdot x+\frac{i}{2}\nabla_x\cdot\bm{a}(t,x)$,
\begin{align}\label{remainder}
&R\left[\varphi^{(t)},u\right](t,x,\xi)\equiv R_1\left[\varphi^{(t)},u\right](t,x,\xi)+R_2\left[\varphi^{(t)},u\right](t,x,\xi)\\
&=\sum_{j=1}^n\int u(t,y)e^{-iy\cdot\xi} \bigg(-\frac{1}{2}\left(r^{L,j}_{0,0}(t,x,y)\right)^2+\xi_jr^{L,j}_{0,0}(t,x,y)+\frac{i}{2}r^{L,j}_{0,1}(t,x,y)\notag\\
&\qquad\qquad+i\left(\nabla a_j(t,x)\cdot(y-x)+r^{L,j}_{0,0}(t,x,y)\right)\partial_{y_j}\bigg)
\overline{\varphi(t,y-x)}dy\notag\\
&+\sum_{j=1}^n\int u(t,y)e^{-iy\cdot\xi} \bigg(-\frac{1}{2}\left(r^{L,j}_{1,0}(t,x,y)^2+2r^{L,j}_{0,0}(t,x,y)r^{L,j}_{1,0}(t,x,y)\right)\notag\\
&\qquad\qquad+\xi_jr^{L,j}_{1,0}(t,x,y)+\frac{i}{2}r^{L,j}_{1,1}(t,x,y)+ir^{L,j}_{1,0}(t,x,y)\partial_{y_j}\bigg)
\overline{\varphi(t,y-x)}dy\notag
\end{align}
and
\begin{align*}
r_{0,0}^{L,j}(t,x,y)&=
\sum_{2\leq|\alpha|\leq L-1}\frac{\partial_{x}^{\alpha}a_j(t,x)}{\alpha!}(y-x)^{\alpha},\\
r_{1,0}^{L,j}(t,x,y)&=\sum_{|\alpha|=L}\frac{(y-x)^{\alpha}}{\alpha!}\int_0^1\partial_{x}^{\alpha}a_j\left(t,y(x,\theta)\right)(1-\theta)^{L-1}d\theta,\\
r_{0,1}^{L,j}(t,x,y)&=
\sum_{1\leq|\alpha|\leq L-2}\frac{\partial_{x}^{\alpha}\partial_{x_j}a_j(t,x)}{\alpha!}(y-x)^{\alpha},\\
r_{1,1}^{L,j}(t,x,y)&=\sum_{|\alpha|=L-1}\frac{(y-x)^{\alpha}}{\alpha!}\int_0^1\partial_{x}^{\alpha}\partial_{x_j}a_j\left(t,y(x,\theta)\right)(1-\theta)^{L-2}d\theta,\\
y(x,\theta)&=x+\theta(y-x)
\end{align*}
where $a_j$ is $j$-th component of $\bm{a}(t,x)$.
\end{lem}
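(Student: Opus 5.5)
We apply $W_{\varphi^{(t)}}$ to the equation, which turns \eqref{eq1} into a first-order transport equation in phase space, and then integrate along the characteristics \eqref{ce}. Write $\varphi(t,x)=\varphi^{(t)}(x)=e^{it\Delta/2}\varphi_0(x)$, so that $i\partial_t\varphi+\frac12\Delta\varphi=0$. Expand the Hamiltonian as $\frac12(\nabla-\bm a)^2 u=\frac12\Delta u-\bm a\cdot\nabla u-\frac12(\nabla\cdot\bm a)u+\frac12|\bm a|^2u$ (signs as dictated by the convention in \eqref{eq1}).

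Then compute $i\partial_t W_{\varphi^{(t)}}u(t,x,\xi)$. The time derivative falls either on $u$ or on $\overline{\varphi(t,y-x)}$. In the term with $\partial_t u$, replace $i\partial_t u$ by $-\frac12(\nabla-\bm a)^2u$ and integrate by parts in $y$.
\begin{itemize}
\item The Laplacian part: combined with the term where the derivative hits $\overline{\varphi}$ (using the free equation for $\varphi$), it yields exactly $i\xi\cdot\nabla_x W u-\frac12|\xi|^2Wu$, the free transport part.
\item The magnetic terms $a_j(t,y)$, $\partial_{y_j}a_j(t,y)$ and $|\bm a(t,y)|^2$: Taylor expand them around $y=x$. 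We expand $a_j$ to order $L-1$ with integral remainder, and $\partial_{x_j}a_j$ to order $L-2$.
\item The zeroth-order Taylor terms produce multiplication by $a_j(t,x)$ and, after integrating by parts in $\partial_{y_j}$, the pieces $-a_j\xi_j Wu$ and $-ia_j\partial_{x_j}Wu$.
\item The first-order terms $\nabla a_j(t,x)\cdot(y-x)$ are rewritten via $(y_k-x_k)e^{-iy\cdot\xi}=(i\partial_{\xi_k}+x_k)e^{-iy\cdot\xi}$. They therefore become $i\nabla a_j\cdot\nabla_\xi Wu$ plus multiplication by $\nabla_x a_j\cdot x$ terms.
\item The higher Taylor terms are collected into $R_1$ and the integral remainders into $R_2$. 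One must keep the $\partial_{y_j}\overline{\varphi}$ factor, coming from the cross term with the first-order Taylor piece, inside $R_1$ exactly as in \eqref{remainder}.
\end{itemize}

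After collecting, the equation reads
\[
\Bigl(i\partial_t+i\nabla_\xi h\cdot\nabla_x-i\nabla_x h\cdot\nabla_\xi-\Psi(t,x,\xi)\Bigr)W_{\varphi^{(t)}}u(t,x,\xi)=R\bigl[\varphi^{(t)},u\bigr](t,x,\xi).
\]
Here $\Psi=-h+\nabla_xh\cdot x+\frac i2\nabla\cdot\bm a$ arises from the zeroth-order multipliers: $-\frac12|\xi-\bm a|^2$ from combining $|\xi|^2$, $a\cdot\xi$ and $|a|^2$; $\nabla_xh\cdot x$ from the $x_k$ terms of the first-order expansion; and the divergence term. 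Verifying that these bookkeeping terms assemble exactly into $h$ and $\Psi$, with the right signs, is the main computational obstacle. I would check it by first treating linear $\bm a$, where $R\equiv0$.

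Finally, set $U(s)=W_{\varphi^{(s)}}u(s,x(s),\xi(s))$ along the characteristic with $(x(t),\xi(t))=(x,\xi)$. The chain rule and \eqref{ce} give $i\frac{d}{ds}U=\Psi U+R$. Solving this linear ODE with the integrating factor $e^{i\int_0^s\Psi}$ from $s=0$ to $s=t$ gives \eqref{fo}.

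Justification of the regularity is needed, since $u\in C(\R;L^2)$ only. One first takes $u_0\in\mathcal S$, or argues in the distributional sense in $t$. Existence and smoothness of the global characteristics $x(s),\xi(s)$ follow from Assumption~\ref{A2}: $\nabla_x h$ grows at most linearly in $\xi$ with bounded coefficients, because $|\nabla a|\lesssim\langle x\rangle^{\rho-1}$. The integrals defining $R$ converge since $\varphi\in\mathcal S$ absorbs the polynomial factors $(y-x)^\alpha$. The identity then extends to $L^2$ data by density, both sides being continuous in $u_0$ pointwise in $(x,\xi)$.
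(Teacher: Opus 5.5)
Your route is the paper's: apply $W_{\varphi^{(t)}}$ with the freely evolving window so that $W_{(i\partial_t+\Delta/2)\varphi}u$ drops out, Taylor-expand the coefficients, integrate by parts to get a first-order transport equation, and integrate along \eqref{ce}. The gap is the transport equation you display, which contradicts your own bookkeeping. Your first bullet gives the free part $i\partial_t+i\xi\cdot\nabla_x-\frac12|\xi|^2$, and you say the zeroth-order multipliers assemble into $-h+\nabla_xh\cdot x+\frac i2\nabla\cdot\bm a$, i.e.\ into $+\Psi$. The computation (this is what \eqref{2} gives) is
\[
\bigl(i\partial_t+i\nabla_\xi h\cdot\nabla_x-i\nabla_xh\cdot\nabla_\xi+\Psi\bigr)W_{\varphi^{(t)}}u=-R,
\]
so along the flow $iU'=-\Psi U-R$. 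Hence
\[
W_{\varphi^{(t)}}u(t,x,\xi)=e^{i\int_0^t\Psi\,ds}\Bigl(W_{\varphi_0}u_0\bigl(x(0),\xi(0)\bigr)+i\int_0^t e^{-i\int_0^\tau\Psi\,ds}R\,d\tau\Bigr),
\]
which is \eqref{fo} with $(\Psi,R)$ replaced by $(-\Psi,-R)$. The case $\bm a=0$, where $R=0$, decides it. Using $\widehat{\varphi^{(t)}}=e^{-it|\eta|^2/2}\widehat{\varphi_0}$ and Parseval gives $W_{\varphi^{(t)}}u(t,x,\xi)=e^{-it|\xi|^2/2}W_{\varphi_0}u_0(x-t\xi,\xi)$. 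Since $\Psi=-\frac12|\xi|^2$ there, \eqref{fo} as printed would give the phase $e^{+it|\xi|^2/2}$. So the printed formula carries a sign misprint, and your $-\Psi$ seems to have been read off from \eqref{fo} rather than derived. A correct execution of your plan proves the corrected formula above. The misprint is harmless later, since only moduli are used and $|\mathrm{Im}\,\Psi|=\frac12|\nabla\cdot\bm a|\le C$.

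Several smaller slips point the same way.
\begin{enumerate}
\item The operator must be $\frac12(\nabla-i\bm a)^2=\frac12\Delta-i\bm a\cdot\nabla-\frac i2(\nabla\cdot\bm a)-\frac12|\bm a|^2$, as in \eqref{2} and Corollary~\ref{cor1}. With the literal $\frac12(\nabla-\bm a)^2$ you expanded, the $\nabla_x$-coefficient becomes $i(\xi+i\bm a)$ and $h$ is replaced by the complex symbol $\frac12\sum_j(\xi_j+ia_j)^2$. Then \eqref{ce} would not be the right flow.
\item The correct identity is $(y_k-x_k)e^{-iy\cdot\xi}=(i\partial_{\xi_k}-x_k)e^{-iy\cdot\xi}$, not with $+x_k$.
\item The zeroth-order Taylor term of $-i\bm a\cdot\nabla u$ contributes $+\xi\cdot\bm a(t,x)$, not $-\xi\cdot\bm a(t,x)$. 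The plus sign is what is needed for $-\frac12|\xi|^2+\xi\cdot\bm a-\frac12|\bm a|^2=-h$.
\item Linear $\bm a$ is not a case with $R\equiv0$. The term $i\nabla a_j(t,x)\cdot(y-x)\partial_{y_j}$ in $R_1$ survives; it is a second-order operator in $(x,\xi)$ applied to $W_{\varphi^{(t)}}u$. Also, $|\bm a(t,y)|^2$ has the quadratic Taylor piece $\sum_j(\nabla a_j(t,x)\cdot(y-x))^2$.
\item That quadratic piece and the cross terms $2\bigl(a_j(t,x)+\nabla a_j(t,x)\cdot(y-x)\bigr)\bigl(r^{L,j}_{0,0}+r^{L,j}_{1,0}\bigr)$ are missing from the printed $R$. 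So collecting terms cannot reproduce it ``exactly''; these terms must be added to $R$, and they are of the same kind as the terms already there.
\end{enumerate}
Test your signs on $\bm a=0$ instead.
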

\begin{proof}
Firstly we reduce \eqref{eq1} to a first-order partial differential equation in $\mathbb{R}^{2n}$ by using the wave packet transform.
Set Taylor expansions of $a_j(t,x)$ and $\partial_{x_j}a_j(t,x)$ as
\begin{align*}
a_j(t,y)&=a_j(t,x)+\nabla a_j(t,x)\cdot(y-x)+r_{0,0}^{L,j}(t,x,y)+r_{1,0}^{L,j}(t,x,y),\\
\partial_{y_j}a_j(t,y)&=\partial_{x_j}a_j(t,x)+r_{0,1}^{L,j}(t,x,y)+r_{1,1}^{L,j}(t,x,y).
\end{align*}
These and integration by parts yield that
\begin{align}\label{2}
&W_{\varphi(t,\cdot)}\left[i\partial_tu+\frac{1}{2}(\nabla-i\bm{a})^2u\right](t,x,\xi)\\
=&W_{\varphi(t,\cdot)}\left[i\partial_tu+\frac{1}{2}\Delta u-\frac{1}{2}|\bm{a}|^2u-i\left(\frac{1}{2}(\nabla\cdot \bm{a})+\bm{a}\cdot\nabla \right)u\right](t,x,\xi)\notag \\
=&\bigg(i\partial_t+i\xi\cdot\nabla_x-\frac{|\xi|^2}{2}-\frac{1}{2}|\bm{a}(t,x)|^2-\frac{i}{2}\nabla_x|\bm{a}(t,x)|^2\cdot\nabla_{\xi}\notag\\
&\qquad+\frac{1}{2}\nabla_x|\bm{a}(t,x)|^2\cdot x+\frac{i}{2}(\nabla_x\cdot \bm{a})(t,x)-i\bm{a}(t,x)\cdot\nabla_x\notag\\
&\qquad+\xi\cdot \bm{a}(t,x)+\nabla_x(\xi\cdot \bm{a}(t,x))\cdot(i\nabla_{\xi}-x)\bigg)W_{\varphi(t,\cdot)}u(t,x,\xi)\notag\\
&+W_{\left(i\partial_t+\frac{\Delta}{2}\right)\varphi(t,\cdot)}u(t,x,\xi)+R\left[\varphi^{(t)},u\right](t,x,\xi),\notag
\end{align}
where $R\left[\varphi^{(t)},u\right](t,x,\xi)$ is \eqref{remainder}
(for more details on deduction of \eqref{2}, we refer \cite{KM}, \cite{M} or \cite{KKI4}).
Taking $\varphi(t,x)=e^{it\Delta/2}\varphi_0(x)$ for  $\varphi_0(x)\!\in\!\CAL{S}(\R^n)\setminus\{0\}$ and 
combining \eqref{2}, we transform \eqref{eq1} into
\begin{align*}
\begin{cases}
(i \partial _t + i\nabla_{\xi}h(t, x, \xi)\cdot\nabla_x-i\nabla_{x}h(t, x, \xi)\cdot \nabla_{\xi}\\
\hspace{4cm}+\Psi(t,x,\xi))W_{\varphi(t, \cdot)}u(t, x, \xi)=R\left[\varphi^{(t)},u\right](t,x,\xi), \\
W_{\varphi(0, \cdot)}u(0,x,\xi) = W_{\varphi_0}u_0 (x, \xi),
\end{cases}
\end{align*}
where $h$ and $\Psi$ are defined as in the statement above.
Using the method of characteristics, we obtain our desired results.
\end{proof}
\subsection{Characterization of wave front set}
We remark on the following characterization of the wave front set to prove our main results.
\begin{prop}[K. Kato, M. Kobayashi and S. Ito~\cite{KKI3}]\label{follandokaji}
Let $\left(x_{0}, \xi_{0}\right)\in\mathbb{R}^{n} \times\left(\mathbb{R}^{n} \backslash\{0\}\right)$, $f \in \mathcal{S}'\left(\mathbb{R}^{n}\right)$ and $0<b<1$. The following conditions are equivalent.
\renewcommand{\theenumi}{\roman{enumi}}
\begin{enumerate}
\item $\left(x_{0}, \xi_{0}\right) \notin WF(f)$
\item There exist a neighborhood $K$ of $x_{0}$ and a conic neighborhood $\Gamma$ of $\xi_{0}$ such that for all $N \in \mathbb{N}, a \geq 1$ and $\varphi(x) \in \mathcal{S}\left(\mathbb{R}^{n}\right) \backslash\{0\}$ there exists a constant $C_{N, a, \varphi}>0$ satisfying
\begin{align}\label{FO}
\left|W_{\varphi_{\lambda}} f(x, \lambda \xi)\right| \leq C_{N, a, \varphi} \lambda^{-N}
\end{align}
for all $\lambda \geq 1$, $x \in K$, and $\xi \in \Gamma_a$, where $\varphi_{\lambda}(x)=\lambda^{\frac{n b}{2}} \varphi\left(\lambda^{b} x\right)$.
\item There exist $\varphi(x) \in \mathcal{S}\left(\mathbb{R}^{n}\right) \backslash\{0\}$, a neighborhood $K$ of $x_{0}$ and a conic neighborhood $\Gamma$ of $\xi_{0}$ such that for all $N \in \mathbb{N}$ and $a \geq 1$ there exists a constant $C_{N, a}>0$ satisfying the inequality \eqref{FO} for all $\lambda \geq 1, x \in K$ and $\xi \in \Gamma_a$, where $\varphi_{\lambda}(x)=\lambda^{\frac{nb}{2}} \varphi\left(\lambda^{b} x\right)$.
\end{enumerate}
\end{prop}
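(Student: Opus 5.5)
The equivalence is proved by the cycle (ii)$\Rightarrow$(iii)$\Rightarrow$(i)$\Rightarrow$(ii). The implication (ii)$\Rightarrow$(iii) is trivial. The other two rest on one elementary fact: since $0<b<1$, the wave packet $\varphi_\lambda(\cdot-x)e^{-iy\cdot\lambda\xi}$ is localized in space to a ball of radius $\sim\lambda^{-b}$ around $x$. It is localized in frequency to a ball of radius $\sim\lambda^{b}$ around $\lambda\xi$, and $\lambda^b\ll\lambda$. Throughout we use that $f\in\mathcal{S}'$ has finite order, so that
$$|W_{\varphi_\lambda}f(y,\eta)|\le C\lambda^{M_0}\langle y\rangle^{M_0}\langle \eta\rangle^{M_0}$$
for some fixed $M_0$. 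This comes from bounding finitely many Schwartz seminorms of $\overline{\varphi_\lambda(\cdot-y)}e^{-i\cdot\,\eta}$.

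\textbf{(i)$\Rightarrow$(ii).} Take $\chi$ and $\Gamma$ from the definition of $WF$, with $\chi\equiv1$ on a ball $B(x_0,2\delta)$. Let $K=B(x_0,\delta)$, and let $\Gamma'$ be a smaller conic neighborhood of $\xi_0$ with $\overline{\Gamma'}\setminus\{0\}\subset\Gamma$. Split $f=\chi f+(1-\chi)f$.

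For $x\in K$, the function $(1-\chi(y))\overline{\varphi_\lambda(y-x)}$ is supported in $|y-x|\ge\delta$. There every Schwartz seminorm of $(1-\chi)\overline{\varphi_\lambda(\cdot-x)}e^{-i\cdot\lambda\xi}$ is $O(\lambda^{C_k}(\lambda^b\delta)^{-M})$ for arbitrary $M$. Each derivative costs at most $\lambda^{b}$ or $\lambda|\xi|\le a\lambda$, and the decay of $\varphi$ gives $(\lambda^b\delta)^{-M}$. Hence $W_{\varphi_\lambda}[(1-\chi)f](x,\lambda\xi)=O(\lambda^{-N})$.

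For the $\chi f$ part, use Plancherel:
$$W_{\varphi_\lambda}[\chi f](x,\lambda\xi)=c\int \overline{\widehat{\varphi_\lambda}(\eta-\lambda\xi)}\,e^{ix\cdot(\eta-\lambda\xi)}\,\widehat{\chi f}(\eta)\,d\eta,\qquad \widehat{\varphi_\lambda}(\zeta)=\lambda^{-nb/2}\widehat\varphi(\lambda^{-b}\zeta).$$
Split the integral into two regions.
\begin{itemize}
\item On $\eta\in\Gamma$, use the rapid decay $|\widehat{\chi f}(\eta)|\le C_N\langle\eta\rangle^{-N}$, integrated against $|\widehat{\varphi_\lambda}(\eta-\lambda\xi)|$. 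Since $|\eta-\lambda\xi|\ge\lambda/2$ gives superpolynomial smallness, we may take $|\eta|\gtrsim\lambda$ there, and this piece is $O(\lambda^{-N})$.
\item On $\eta\notin\Gamma$ with $\xi\in\Gamma'_a$, we have $|\eta-\lambda\xi|\ge c(\lambda+|\eta|)$. Then $|\widehat\varphi(\lambda^{-b}(\eta-\lambda\xi))|\le C_M\lambda^{(b-1)M}\langle\eta\rangle^{-M}$ beats the polynomial growth of $\widehat{\chi f}$.
\end{itemize}
This gives (ii) with $\Gamma'$ for every $\varphi$.

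\textbf{(iii)$\Rightarrow$(i).} This is the heart of the matter. Fix the $\varphi$, $K$ and $\Gamma$ of (iii). Choose $\chi\in C_0^\infty$ supported in the interior of $K$ with $\chi\equiv1$ near $x_0$, and a cone $\Gamma'\Subset\Gamma$. For $\zeta\in\Gamma'$ with $|\zeta|$ large, set $\lambda=|\zeta|$ and $\omega=\zeta/\lambda$, and insert the inversion formula \eqref{if} with the window $\varphi_\lambda$:
$$\widehat{\chi f}(\lambda\omega)=\frac{c}{\|\varphi\|^2}\iint W_{\varphi_\lambda}f(y,\eta)\,I_\lambda(y,\eta)\,dy\,d\eta,\qquad I_\lambda(y,\eta)=\int\chi(x)\varphi_\lambda(x-y)e^{ix\cdot(\eta-\lambda\omega)}dx.$$
Write $\eta=\lambda\theta$ and split the $(y,\theta)$ integral into three regions.
\begin{enumerate}
\item On $y\in K$ and $\theta\in\Gamma_a$ with $a$ fixed (say $a=2$), hypothesis (iii) gives $|W_{\varphi_\lambda}f|\le C_N\lambda^{-N}$. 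Since $|I_\lambda|\lesssim\lambda^{-nb/2}$ and the region has volume $O(\lambda^n)$, this piece is $O(\lambda^{-N+n})$.
\item On $y\notin K$, we have $\operatorname{dist}(y,\operatorname{supp}\chi)\ge\delta$, so $|I_\lambda|\le C_M(\lambda^{b}(\delta+|y|))^{-M}$.
\item On $\theta\notin\Gamma_a$, we have $|\eta-\lambda\omega|\ge c(\lambda+|\eta|)$. Integrating by parts $M$ times in $x$ through $e^{ix\cdot(\eta-\lambda\omega)}$ costs derivatives of $\chi$ and of $\varphi_\lambda$, each at most $\lambda^b$. This gives $|I_\lambda|\le C_M\lambda^{bM}(\lambda+|\eta|)^{-M}\langle\lambda^b(x-y)\rangle^{-M}$ integrated appropriately.
\end{enumerate}
In regions 2 and 3, the gain $\lambda^{(b-1)M}$, respectively $\lambda^{-bM}$, together with the extra decay in $|y|$ and $|\eta|$, absorbs the tempered bound on $W_{\varphi_\lambda}f$. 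Taking $M$ large yields $|\widehat{\chi f}(\zeta)|\le C_N|\zeta|^{-N}$ on $\Gamma'$.

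The main obstacle is this last step. The difficulty is the bookkeeping that makes the tempered bound on $W_{\varphi_\lambda}f$, with its explicit $\lambda$-dependence, uniformly dominated in regions 2 and 3. In particular one must handle the mixed region where $y\notin K$ and $\theta\notin\Gamma_a$ simultaneously, and the small-$|\eta|$ part, where one uses $|\eta-\lambda\omega|\ge\lambda/2$. The point to be checked carefully is that $b<1$ leaves a genuine net power $\lambda^{-(1-b)}$ per integration by parts.
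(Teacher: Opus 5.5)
The paper does not prove this proposition: it is quoted from \cite{KKI3}, which extends the Folland/\=Okaji characterization to all $0<b<1$. So there is no in-paper argument to compare yours with.

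Your proof is correct and follows the standard route. You use a cutoff plus Parseval for (i)$\Rightarrow$(ii), and the inversion formula \eqref{if} with window $\varphi_\lambda$, $\lambda=|\zeta|$, plus a phase-space splitting for (iii)$\Rightarrow$(i). You have also identified the right mechanism: $b>0$ gives a gain $\lambda^{-b}$ per order of decay off $K$, and $b<1$ gives a net gain $\lambda^{-(1-b)}$ per integration by parts off the cone.

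The one place to tighten is your region 2. There the bound $|I_\lambda|\le C_M(\lambda^b(\delta+|y|))^{-M}$ has no decay in $\eta$, so on its own it closes only where $|\eta|\le 2\lambda$. Instead, integrate by parts with $(1-\lambda^{-2b}\Delta_x)$; each application is $O(1)$ on $\chi(x)\varphi_\lambda(x-y)$. This gives the single estimate
\[
|I_\lambda(y,\eta)|\le C_{M,M'}\,\lambda^{nb/2}\,\langle\lambda^{-b}(\eta-\lambda\omega)\rangle^{-M}\sup_{x\in\operatorname{supp}\chi}\langle\lambda^{b}(x-y)\rangle^{-M'}.
\]
With this in hand, split into three regions:
\begin{enumerate}
\item $\{y\in K,\ \eta/\lambda\in\Gamma_2\}$, handled by hypothesis (iii);
\item $\{y\notin K,\ \eta/\lambda\in\Gamma_2\}$, where you take $M=0$ and get a gain $\lambda^{-bM'}$ against an $\eta$-volume $O(\lambda^n)$;
\item $\{\eta/\lambda\notin\Gamma_2\}$ for all $y$, where the factor $\lambda^{bM}(\lambda+|\eta|)^{-M}$ together with $\langle y\rangle^{-M'}$ absorbs the tempered bound on $W_{\varphi_\lambda}f$.
\end{enumerate}
The mixed region then never has to be treated separately.

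Also, in your closing sentence the gains attached to regions 2 and 3 are interchanged.
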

\begin{rem}
This type theorem has been originally obtained by G. B. Folland \cite{F} and refined by T. \=Okaji \cite{O1} and \cite{O2}.
\end{rem}
\par
\section{Key lemmas}\label{sec3}
In this section, we prepare some
lemmas for the proof of Theorem \ref{mt1}.
\par
Now let us prepare following key lemmas to show Theorem \ref{mt1}.
In what follows, we write $x^{\ast}_s=x\left(s ; t_{0}, x, \lambda \xi\right)$, $\xi^{\ast}_s=\xi\left(s ; t_{0}, x, \lambda \xi\right)$, $s^{\ast}=s-t_{0}$ and $\varphi_{\lambda}(x)=\left(\varphi_{0}\right)_{\lambda}(x)$ for brevity.
\par
\begin{lem}\label{aj}
Take $b=\min\left(\frac{1}{8},\frac{1-\rho}{8}\right)$.
Let $\bm{a}(t,x)$ satisfy Assumption \ref{A2}, $u(t,x)\in C(\R;L^2(\R^n))$, a multi-indices $\alpha\in\Z^n_+$ be $|\alpha|\geq 2$.
Assume that for $\sigma\geq0$, and $a\geq1$, there exists $C_{\sigma, a, \varphi_0}>0$ satisfying
\begin{align}
\left|W_{\varphi_{\lambda}^{\left(s^{\ast}\right)}} u\left(t, x^{\ast}_s, \xi^{\ast}_s\right)\right| \leq C_{\sigma, a, \varphi_{0}} \lambda^{-\sigma}
\end{align}
for all $\varphi_0(x)\in\Sh(\R^n)\setminus\{0\}$, $x\in K$, $\xi\in\Gamma_a$, $\la\geq1$, and $0\leq s\leq t_0$.
Then there exists $C_{t_0,\alpha,\sigma, a, \varphi_0}>0$ such that 
\begin{align}\label{r}
\left|\int_0^{t_0}\left|R_1\left[\varphi_{\la}^{(s^{\ast})},u\right](s, x^{\ast}_s, \xi^{\ast}_s)\right|ds\right|\leq C_{t_0,\alpha,\sigma, a, \varphi_0}\lambda^{-\sigma-\delta}
\end{align}
for $x\in K$, $\xi\in\Gamma_a$, $\la\geq1$, $0\leq s\leq t_0$ and $\delta\in(2-2b-\rho,2b)$.
\end{lem}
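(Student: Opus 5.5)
The idea is to rewrite $R_1$ as a finite sum of wave packet transforms of $u$ with new basic wave packets. Each of these is either controlled by the hypothesis, which costs $\lambda^{-\sigma}$, or is trivially small, and the spatial localization $|y-x|\lesssim\lambda^{-b}$ carried by $\varphi_\lambda$ supplies the extra factor $\lambda^{-\delta}$.

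Fix $s$ and write $\varphi=\varphi^{(s^*)}_\lambda$. The coefficients $r^{L,j}_{0,0}(s,x^*_s,y)$ and $r^{L,j}_{0,1}(s,x^*_s,y)$ are polynomials in $(y-x^*_s)$ of degree $\ge2$ and $\ge1$ respectively. Their coefficients are $\partial^\alpha a_j(s,x^*_s)/\alpha!$ with $|\alpha|\ge2$, which by Assumption~\ref{A2} are bounded by $C\langle x^*_s\rangle^{\rho-|\alpha|}$.

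Each term of $R_1$ therefore has the form $c_{\alpha}(s)\,W_{\psi_{\alpha}}u(s,x^*_s,\xi^*_s)$, where $\psi_\alpha(z)=z^\alpha\varphi(z)$ or $z^\alpha\partial_j\varphi(z)$, and $c_\alpha$ is a product of derivatives of $a_j$ possibly multiplied by $\xi^*_{s,j}$. The term $i\nabla a_j\cdot(y-x)\partial_{y_j}$ has the same structure, with $|\alpha|=1$ but with an extra derivative on the packet.

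Next I would show that $z^\alpha\varphi^{(s^*)}_\lambda(z)$ can be written as $\lambda^{-b|\alpha|}$ times a packet of the form $(\varphi_0')^{(s^*)}_\lambda$, up to lower-order terms. Here $\varphi_0'\in\Sh$ is another basic wave packet. This step uses the commutation relation $x\,e^{is\Delta/2}=e^{is\Delta/2}(x+is\nabla)$ and the scaling $\varphi_\lambda(x)=\lambda^{nb/2}\varphi(\lambda^bx)$. Each factor $(y-x)$ gains $\lambda^{-b}$, and each $\partial_y$ costs $\lambda^{b}$, both uniformly for $0\le s\le t_0$. Because the hypothesis holds for \emph{all} $\varphi_0$, it then gives $|W_{\psi_\alpha}u(s,x^*_s,\xi^*_s)|\le C\lambda^{-\sigma}\lambda^{-b|\alpha|}$.

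For the coefficients I need the bounds on the characteristics from Section~\ref{sec2}. Since $|\xi^*_s|\sim\lambda$ and $|x^*_s-x|\lesssim\lambda|s^*|$, I get $\langle x^*_s\rangle\lesssim\lambda$. The term $\xi_j r^{L,j}_{0,0}$ has a coefficient of size at most $\lambda\langle x^*_s\rangle^{\rho-|\alpha|}$ with $|\alpha|\ge2$, which is crudely at most $\lambda$. Its packet gives $\lambda^{-2b}$, so this term is of order $\lambda^{1-2b}$, which is not yet small. I expect this to be the main obstacle.

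The fix is to split the time integral. On $|s^*|\le\lambda^{-\gamma}$ one simply uses the short length of the interval. On the remainder, $\langle x^*_s\rangle\gtrsim\lambda|s^*|$, which follows from a lower bound on $|x^*_s|$ from the characteristic lemmas, valid because $\bm a$ is sublinear by $\rho<1$. Integrating $\lambda^{1-2b}(\lambda|s^*|)^{\rho-2}$ in $s$ then yields $\lambda^{\rho-2+2b}$ up to logarithms. This is exactly the threshold $\delta>2-2b-\rho$ in the statement. The remaining terms, namely $(r^{L,j}_{0,0})^2$, $r^{L,j}_{0,1}$ and the $\nabla a_j\cdot(y-x)\partial_{y_j}$ term, have at least as much decay and are handled in the same way. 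The last one gives $\langle x^*_s\rangle^{\rho-1}\lambda^{-b}\lambda^{b}$ and again integrates to something small.

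Finally, $L$ is taken large and fixed, so that only finitely many terms appear. Summing gives \eqref{r}. The choice $b\le\min(1/8,(1-\rho)/8)$ ensures that the interval $(2-2b-\rho,2b)$ is consistent with all these exponent bookkeepings.
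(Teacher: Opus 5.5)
There is a genuine gap, and it sits exactly where the paper's proof does its real work.

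\textbf{The uniform-in-$s$ gain is false.} You assert that each factor $(y-x^*_s)$ gains $\lambda^{-b}$ ``uniformly for $0\le s\le t_0$'', and you treat the $s\nabla$ terms in the commutator identity you quote as lower order. They are not lower order. The operator $s^*\partial_y$ acting on $\varphi_{0,\lambda}$ produces $|s^*|\lambda^{b}$, which dominates $\lambda^{-b}$ as soon as $|s^*|>\lambda^{-2b}$. This is the dispersion of $\varphi^{(s^*)}_\lambda=e^{is^*\Delta/2}\varphi_{0,\lambda}$, whose width is of order $\lambda^{-b}+|s^*|\lambda^{b}$. So the localization $|y-x^*_s|\lesssim\lambda^{-b}$ you rely on holds only for very short times. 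What \eqref{a3} and the hypothesis actually give is
\[
\bigl|W_{\psi_\alpha}u(s,x^*_s,\xi^*_s)\bigr|\le C\lambda^{-\sigma}\sum_{\beta\le\alpha}|s^*|^{|\beta|}\lambda^{b(2|\beta|-|\alpha|)} .
\]
This is $O(\lambda^{-\sigma-b|\alpha|})$ for $|s^*|\le\lambda^{-2b}$, but for $\lambda^{-2b}\le|s^*|\le t_0$ it is a \emph{loss} of size $(\lambda^{b}|s^*|)^{|\alpha|}$. The same applies to the $\nabla a_j\cdot(y-x)\partial_{y_j}$ term: its packet factor is $1+|s^*|\lambda^{2b}$, not $1$.

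The paper handles this by splitting at $|s^*|=\lambda^{-2b}$. In case (II) it pays for the loss with the decay of the coefficient, using $\langle x^*_s\rangle\gtrsim\lambda|s^*|$ from Lemma~\ref{L1} with $p=1-2b$:
\[
\langle x^*_s\rangle^{1+\ep+\rho-|\alpha|}(\lambda^{b}|s^*|)^{|\alpha|}\lesssim\lambda^{1+\ep+\rho-(1-b)|\alpha|}\le\lambda^{-2b}\qquad(|\alpha|\ge2,\ \ep=\tfrac{1-\rho}{2}).
\]
This last inequality is precisely where $b\le(1-\rho)/8$ is needed. In your bookkeeping that hypothesis never enters, which is a symptom of the missing step.

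\textbf{The exponent count is also wrong.} On the long interval,
\[
\int_{\{\lambda^{-\gamma}\le|s^*|\le t_0\}}\lambda^{1-2b}(\lambda|s^*|)^{\rho-2}\,ds\approx\lambda^{-2b-(1-\gamma)(1-\rho)},
\]
not $\lambda^{\rho-2+2b}$. Your short-interval piece is $\lambda^{1-2b-\gamma}$. Balancing the two, your scheme gains at most $2b$, and slightly less when $\gamma<1$, as needed to invoke Lemma~\ref{L1}. No argument of this kind reaches $\delta>2-2b-\rho$.

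In fact $2-2b-\rho>1-2b\ge 3/4>2b$, so the interval $(2-2b-\rho,2b)$ is empty and the lemma as literally written is vacuous. Your closing claim that $b\le\min(1/8,(1-\rho)/8)$ makes that interval consistent is therefore false. What the paper actually proves, and what the induction $P(\sigma)\Rightarrow P(\sigma+2b)$ uses, is the bound $\lambda^{-\sigma-2b}$ in \eqref{r2}.

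\textbf{How to repair it.} Insert the split at $\lambda^{-2b}$ and the trade-off above, and your outline works. The paper also avoids the small loss caused by your crude bound $|\xi^*_s|\lesssim\lambda$ on the short interval. It keeps the factor $|\xi^*_s|\langle x^*_s\rangle^{-1-\ep}$ intact and integrates it in $s$ via Lemma~\ref{L2}, uniformly in $\lambda$.
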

\begin{proof}
We only estimate the following term
\begin{align*}
&\left|\int_0^{t_0}\left|\int \overline{\varphi_{\la}^{(s^{\ast})}(y-x^{\ast}_s)}u(s,y)e^{-iy\cdot\xi^{\ast}_s} {\xi^{\ast}_{s,j}}r_{1,j}^L(s,x^{\ast}_s,y)dy\right|ds\right|
\end{align*}
for $0\leq s\leq t_0$ and $j=1,\ldots,n$.
For the others, it can be obtained by the same way discussed in \cite{KI}.
\par
By virtue of Lemma \ref{L2}, there exists $C_{\delta}>0$ such that
\begin{align*}
\int_0^{t_0}\frac{\left|\xi^{\ast}_s\right|}{\langle x^{\ast}_s\rangle^{1+\delta}}ds\leq C_{\delta}(1+t_0)
\end{align*}
for $x\in K$ and $\xi\in\Gamma_a$.
So it suffices to show that there exists $C_{\alpha,\sigma, a, \varphi_0}>0$ such that 
\begin{align}\label{r2}
\left|\int_0^{t_0}\left|\int \overline{\varphi_{\la}^{(s^{\ast})}(y-x^{\ast}_s)}u(s,y)e^{-iy\cdot\xi^{\ast}_s} {\xi^{\ast}_{s,j}}r_{0,0}^{L,j}(s,x^{\ast}_s,y)dy\right|ds\right|\leq C_{\alpha,\sigma, a, \varphi_0}\lambda^{-\sigma-2b}
\end{align}
for $x\in K$, $\xi\in\Gamma_a$, $\la\geq1$, $0\leq s\leq t_0$ and $\delta\in(2-2b-\rho,2b)$.
\par
Taking $\ep>0$ as $\ep=(1-\rho)/2$, the assumption and the equality \eqref{a3} yield that
\begin{align*}
&\left|\int \overline{\varphi_{\la}^{(s^{\ast})}(y-x^{\ast}_s)}u(s,y)e^{-iy\cdot\xi^{\ast}_s} {\xi^{\ast}_{s,j}}r_{0,0}^{L,j}(s,x^{\ast}_s,y)dy\right|\\
&\leq \sum_{2 \leq|\alpha| \leq L-1} \sum_{\substack{\beta+\gamma=\alpha \\ \beta^{\prime} \leq \beta, \gamma^{\prime} \leq \gamma}}\frac{C_{\beta, \gamma, \beta^{\prime}, \gamma^{\prime}}}{\alpha !}\left|\partial_{x}^{\alpha} a_j\left(s, x^{\ast}_s\right)\right|
\left|\xi^{\ast}_{s,j}\right|
s^{\ast|\beta|} \lambda^{b(|\beta|-|\gamma|)}\\
&\qquad\times\left|W_{(\varphi_0^{\beta^{\prime}, \gamma^{\prime}})^{(s^{\ast})}_{\lambda}} u\left(s, x^{\ast}_s, \xi^{\ast}_s\right)\right| \\
&\leq \sum_{2 \leq|\alpha| \leq L-1} \sum_{\beta+\gamma=\alpha} \frac{C_{\beta, \gamma}}{\alpha !}\frac{\left|\xi^{\ast}_{s}\right|}{\langle x^{\ast}_s\rangle^{1+\ep}}  \langle x^{\ast}_s\rangle^{1+\ep+\rho-|\alpha|} s^{\ast|\beta|}\lambda^{b(|\beta|-|\gamma|)} \lambda^{-\sigma}
\end{align*}
for $s\in[0,t_0]$, where $(\varphi_0^{\beta^{\prime}, \gamma^{\prime}})^{(s^{\ast})}_{\lambda}=e^{is^{\ast}\Delta/2}(x^{\gamma'}\partial_{x}^{\beta'}\varphi_0)_{\lambda}$.
In order to show \eqref{r2}, it only remains to show that
\begin{align}\label{sup}
\sup_{s\in[0,t_0]}\left(\langle x^{\ast}_s\rangle^{1+\ep+\rho-|\alpha|} {s^{\ast}}^{|\beta|}\right) \lambda^{b(|\beta|-|\gamma|)} \leq C_{t_0,\alpha}\la^{-2b}
\end{align}
for some  $C_{t_0, \alpha}>0$, any $\alpha, \beta, \gamma\in\Z^n_{+}$ with $2\leq|\alpha|$ and $\beta+\gamma=\alpha$.
Indeed, if the above inequality holds, then it holds that
\begin{align*}
&\left|\int_0^{t_0}\left|\int \overline{\varphi_{\la}^{(s^{\ast})}(y-x^{\ast}_s)}u(s,y)e^{-iy\cdot\xi^{\ast}_s} {\xi^{\ast}_{s,j}}r_{0,0}^{L,j}(s,x^{\ast}_s,y)dy\right|ds\right|\\
&\leq\sum_{2 \leq|\alpha| \leq L-1} \sum_{\beta+\gamma=\alpha} \frac{C_{\beta, \gamma}}{\alpha !}\int_0^{t_0}\frac{\left|\xi^{\ast}_{s}\right|}{\langle x^{\ast}_s\rangle^{1+\ep}}\langle x^{\ast}_s\rangle^{1+\ep+\rho-|\alpha|} s^{\ast|\beta|}ds\lambda^{b(|\beta|-|\gamma|)} \lambda^{-\sigma}\\
&\leq\sum_{2 \leq|\alpha| \leq L-1} \sum_{\beta+\gamma=\alpha} \frac{C_{\beta, \gamma}}{\alpha !}
\int_0^{t_0}\frac{\left|\xi^{\ast}_{s}\right|}{\langle x^{\ast}_s\rangle^{1+\ep}}ds
\sup_{s\in[0,t_0]}\left(\langle x^{\ast}_s\rangle^{1+\ep+\rho-|\alpha|} s^{\ast|\beta|}\right) \lambda^{b(|\beta|-|\gamma|)} \lambda^{-\sigma}\\
&\leq\sum_{2 \leq|\alpha| \leq L-1} \sum_{\beta+\gamma=\alpha} \frac{C_{\beta, \gamma}}{\alpha !}
C_{t_0,\alpha}C_{\delta}(1+t_0)\lambda^{-\sigma-2b}
\leq C\lambda^{-\sigma-2b},
\end{align*}
which implies \eqref{r2}.
We estimate the left hand side of \eqref{sup} by separating into following two cases; (I) $0\leq|s^{\ast}|\leq \la^{-2b}$ and (II) $\la^{-2b}\leq|s^{\ast}|\leq t_0$.
\par
(I) For $s$ satisfying $0\leq|s^{\ast}|\leq \la^{-2b}$, since $1+\ep+\rho-2\leq0,$ we have
\begin{align*}
\langle x^{\ast}_s\rangle^{1+\ep+\rho-|\alpha|} s^{\ast|\beta|}\lambda^{b(|\beta|-|\gamma|)}
&\leq \lambda^{-b(|\beta|+|\gamma|)}\leq\la^{-2b}
\end{align*}
\par
(II) For $s$ satisfying $\la^{-2b}\leq|s^{\ast}|\leq t_0,$ since $b=\min\left(\frac{1}{8},\frac{1-\rho}{8}\right)$, \eqref{a1} yields
\begin{align*}
\langle x^{\ast}_s\rangle^{1+\ep+\rho-|\alpha|} \lambda^{b(|\beta|-|\gamma|)}s^{\ast|\beta|}
&\leq C_{a,s^{\ast}}\left(\la|s^{\ast}|\right)^{1+\ep+\rho-|\alpha|}\left(\lambda^{b}|s^{\ast}|\right)^{|\alpha|}\\
&\leq  C_{a,s^{\ast}}t_0^{1+\ep+\rho}\la^{1+\ep+\rho-(1-b)|\alpha|}\\
&\leq  C_{a,s^{\ast}}t_0^{1+\ep+\rho}\la^{2b+1+\ep+\rho-2}\leq t_0^{1+\ep+\rho}\la^{-2b}
\end{align*}
for all $\lambda\geq 1$ and $\beta, \gamma\in\Z^n_{+}$ with $\beta+\gamma=\alpha$,
where $C_{a,s^{\ast}}$ only depends on $a$ and $s^{\ast}$.
Therefore, we obtain \eqref{sup}. 
\end{proof}
\begin{lem}\label{R2}
Let $\bm{a}(t,x)$ satisfy Assumption \ref{A2}, $u(t,x)\in C(\R;L^2(\R^n))$ and a multi-indices $\alpha\in\Z^n_+$ be $|\alpha|= L$ with $L\in\N\cup\{0\}$.
Then for all $a\geq1$ and $k\in\N$ there exists $C_{\alpha, a, \varphi_0}>0$ such that 
\begin{align}\label{r}
\left|\int_0^{t_0}\left|R_2\left[\varphi_{\la}^{(s^{\ast})},u\right](s, x_s^{\ast}, \xi_s^{\ast})\right|ds\right|\leq C_{\alpha,\sigma, a, \varphi_0}\lambda^{-k}
\end{align}
for $x\in K$, $\xi\in\Gamma_a$, $\la\geq1$ and $0\leq s\leq t_0$ if we take $L$ sufficiently large.
\end{lem}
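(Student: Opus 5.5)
The remainder $R_2$ consists of the terms built from the integral Taylor remainders $r^{L,j}_{1,0}$ and $r^{L,j}_{1,1}$. Each of these carries the factor $(y-x)^\alpha$ with $|\alpha|=L$ or $L-1$. The plan is to show that this factor, paired with the wave packet $\varphi^{(s^\ast)}_\la(y-x^\ast_s)$, supplies a factor of order $\la^{-bL}$ up to controlled growth. In contrast with Lemma \ref{aj}, no decay hypothesis on $u$ is needed: the crude bound $\|u(s)\|_{L^2}=\|u_0\|_{L^2}$, from unitarity of the propagator, suffices once $L$ is large.

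\emph{Step 1: bound the pointwise size of each term.} I will bound every term of $R_2$ by Cauchy--Schwarz in $y$:
\[
|R_2[\varphi^{(s^\ast)}_\la,u](s,x^\ast_s,\xi^\ast_s)|\le \|u_0\|_{L^2}\sum \bigl\|\,m(s,x^\ast_s,\cdot)\,\partial^{\le1}\varphi^{(s^\ast)}_\la(\cdot-x^\ast_s)\bigr\|_{L^2}.
\]
Here $m$ ranges over $(r^{L,j}_{1,0})^2$, $r^{L,j}_{0,0}r^{L,j}_{1,0}$, $\xi^\ast_{s,j}r^{L,j}_{1,0}$, $r^{L,j}_{1,1}$ and $r^{L,j}_{1,0}$. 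The derivative $\partial_{y_j}$ lands on $\varphi$ after integration by parts, or directly as written.

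By Assumption \ref{A2}, $|\partial^\alpha a_j|\le C_\alpha$ for $|\alpha|\ge1$, so $|r^{L,j}_{1,0}|\le C|y-x|^L$ and $|r^{L,j}_{1,1}|\le C|y-x|^{L-1}$. Likewise $|r^{L,j}_{0,0}|\le C(|y-x|^2+|y-x|^{L-1})$. For $|\xi^\ast_s|$ I will use the characteristic bounds from Section 2 (the bound \eqref{a1} and Lemma \ref{L2}), which give $|\xi^\ast_s|\le C_a\la$ uniformly for $s\in[0,t_0]$, $x\in K$, $\xi\in\Gamma_a$. This follows since $\dot\xi=-\nabla_x h$ is sublinear and $|\xi-\bm a|^2/2$ is essentially conserved up to a sublinear error.

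\emph{Step 2: moment estimates of the propagated wave packet.} I need
\[
\bigl\| |z|^{M}\partial^{\beta}\varphi^{(s^\ast)}_\la(z)\bigr\|_{L^2}\le C_{M,t_0}\,\la^{b|\beta|}\bigl(\la^{-b}+|s^\ast|\la^{b}\bigr)^{M}
\]
for $|\beta|\le1$. This follows on the Fourier side: $\widehat{\varphi^{(s^\ast)}_\la}(\eta)=e^{-is^\ast|\eta|^2/2}\la^{-nb/2}\widehat{\varphi_0}(\la^{-b}\eta)$, so multiplying by $z^\gamma$ corresponds to $\eta$-derivatives. Each such derivative produces either $\la^{-b}$ or $s^\ast\eta\sim s^\ast\la^{b}$. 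This is the same commutator identity $x^\gamma e^{is\Delta/2}=e^{is\Delta/2}(x+is\nabla)^\gamma$ already used in Lemma \ref{aj} via \eqref{a3}. Since $|s^\ast|\le t_0$, the bound is at most $C_{t_0}\la^{bM}$ in the worst case.

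\emph{Step 3: the main obstacle and how I will handle it.} The moment bound grows like $\la^{bL}$ rather than decaying, so a naive count fails for $|s^\ast|\gtrsim\la^{-2b}$; this is the main obstacle. I will use the spatial decay of the coefficients: the sharper estimate is $|\partial^\alpha a_j(t,y)|\le C\langle y\rangle^{\rho-L}$. Along the segment $y(x^\ast_s,\theta)$, I will split the $y$-integral into $|y-x^\ast_s|\le|x^\ast_s|/2$, where $\langle y(x,\theta)\rangle\gtrsim\langle x^\ast_s\rangle$, and its complement. On the complement, the wave packet's rapid decay beyond its width $\la^{-b}+|s^\ast|\la^{b}$ combined with $|x^\ast_s|\gtrsim\la|s^\ast|$ from \eqref{a1} gives $O(\la^{-\infty})$. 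On the near region, as in case (II) of Lemma \ref{aj}, the term is bounded by
\[
C\la\,\langle\la s^\ast\rangle^{\rho-L}\bigl(\la^{-b}+|s^\ast|\la^{b}\bigr)^{L}\lesssim \la^{1+\rho+(1-b)(-L)+\ldots},
\]
which is $\le\la^{-k}$ once $L$ is large, because $b<1$. For $|s^\ast|\le\la^{-2b}$ the width is $\lesssim\la^{-b}$, so the bound $\la\cdot\la^{-bL}$ suffices directly. The quadratic terms contribute $\la^{-2bL}$-type bounds and are handled the same way. Integrating over $s\in[0,t_0]$ costs only the factor $t_0$. Choosing $L\ge L(k,b,\rho)$ then yields \eqref{r}.
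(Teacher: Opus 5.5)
Your proof is correct. It rests on the same mechanism as the paper. Near $x^\ast_s$ the Taylor remainder is small because $\partial^\alpha a_j$ decays like $\langle x^\ast_s\rangle^{\rho-L}$ with $|x^\ast_s|\gtrsim\la|s^\ast|$ by \eqref{a1}. Far from $x^\ast_s$ the propagated packet is negligible once the cut scale exceeds its width $\la^{-b}+|s^\ast|\la^{b}$ by a positive power of $\la$. Your implementation, however, is noticeably lighter.

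\begin{itemize}
\item The paper reinserts $u$ through the inversion formula \eqref{if}, integrates by parts with $(1-\Delta_y)^N$ (which is why it needs the smooth cutoffs $\psi_l$), and closes with Young's inequality and the $L^2$-isometry of the wave packet transform. Since no decay of $u$ enters this lemma, your direct Cauchy--Schwarz in $y$ with $\|u(s)\|_{L^2}=\|u_0\|_{L^2}$ does the same job and lets you use sharp cutoffs.
\item The paper splits at the intermediate scale $\la^{-d}(1+\la|s^\ast|)$, $0<d<b$. It bounds $|y-x^\ast_s|^L$ by the support size, gaining $\la^{-dL}$, and needs a separate argument for $|s^\ast|\le\la^{d'-1}$. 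You instead split at $|x^\ast_s|/2$ for $|s^\ast|\ge\la^{-2b}$ and bound $|y-x^\ast_s|^L$ by the moments \eqref{a3}. This gains $(\la|s^\ast|)^{\rho}\la^{-(1-b)L}$, a better rate per Taylor order.
\item The paper pairs $|\xi^\ast_s|$ with $\langle x^\ast_s\rangle^{-1-\ep}$ and integrates via Lemma~\ref{L2}. You simply spend one power of $\la$ and absorb it by enlarging $L$. That is legitimate here only because $L$ is free; in Lemma~\ref{aj} the gain is fixed at $\la^{-2b}$, and Lemma~\ref{L2} is genuinely needed there.
\end{itemize}

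Two details should be tightened.
\begin{itemize}
\item The bound $|\xi^\ast_s|\le C\la$ on all of $[0,t_0]$ does not follow from \eqref{a1}, which is valid only for $|s^\ast|\ge\la^{p-1}$, nor from Lemma~\ref{L2}. An energy-conservation heuristic is also unavailable, since Assumption~\ref{A2} does not control $\partial_t\bm{a}$. Obtain it from Gronwall applied to $\langle x(s)\rangle+|\xi(s)|$, using $|\nabla_x\bm{a}|\le C$ and $|\bm{a}|\le C\langle x\rangle^{\rho}$.
\item Invoke \eqref{a1} with $p\le 1-2b$, which is admissible since $b\le 1/8$. Then it covers your whole regime $|s^\ast|\ge\la^{-2b}$, and the range $1\le\la\le\la_0$ is handled by the trivial bound.
\end{itemize}
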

\begin{rem}
In order to show the decay of $\lambda$ in \eqref{r}, we do not need
to assume $u(t)$ has a decay of $\lambda$.
By virtue of Lemma~\ref{L3}, we can obtain the decay of $\lambda$ from $(y-x_s^{\ast})^{\alpha}\cdot\varphi^{(s^{\ast})}_{\lambda}(y-x_s^{\ast})$ if we have polynomials $(y-x_s^{\ast})^{\alpha}$ with its order $|\alpha|$ sufficiently large as demonstrated below.
Polynomials $y-x_s^{\ast}$ of desired degree are obtained from the Taylor expansion of the potential $\bm{a}(t,x)$ since it is smooth.
\end{rem}
\begin{proof}
\par
We take $\psi_{1}$ and $\psi_{2}$ belonging to $C^{\infty}(\R)$ as
$$
\begin{array}{l}
\psi_{1}(s)=\left\{\begin{array}{ll}
1 & \text { for } s \leq 1, \\
0 & \text { for } s \geq 2,
\end{array}\right. \\
\psi_{2}(s)=1-\psi_{1}(s), \quad  s \in \mathbb{R}.
\end{array}
$$
Take $d$ with $0<d<b$ and
define $r_{1,0}^{\alpha,j}(t,x,y)=\int_0^1\partial_{x}^{\alpha}a_j\left(t,y(x,\theta)\right)(1-\theta)^{L-1}d\theta$.
We devide one of terms in $R_2\left[\varphi_{\lambda}^{(s)},u\right](s, x(s), \xi(s))$
into two parts as
\begin{align*}
\sum_{j=1}^n\int u(s,y)e^{-iy\cdot\xi^{\ast}_s} \xi^{\ast}_{j,s}r^{L,j}_{1,0}(s,x^{\ast}_s,y)
\overline{\varphi_{\lambda}^{(s)}(y-x^{\ast}_s)}dy=\sum_{l=1}^2I_{\alpha, l}\left(s, x^{\ast}_s, \xi^{\ast}_s, \lambda\right),
\end{align*}
where
\begin{align*}
I_{\alpha, l}\left(s, x^{\ast}_s, \xi^{\ast}_s, \lambda\right)&=\sum_{j=1}^n\xi_{j,s}^{\ast}\iiint \psi_{l}\left(\frac{\lambda^{d}\left|y-x^{\ast}_s\right|}{1+\lambda\left|s^{\ast}\right|}\right) r_{1,0}^{\alpha,j}\left(s, x^{\ast}_s, y\right)\left(y-x^{\ast}_s\right)^{\alpha} \\
& \times \varphi_{\lambda}^{\left(s^{\ast}\right)}\left(y-x^{\ast}_s\right) \varphi_{\lambda}^{\left(s^{\ast}\right)}(y-z) W_{\varphi_{\lambda}^{\left(s^{\ast}\right)}} u(s, z, \eta) e^{-i y\left(\xi^{\ast}_s-\eta\right)} dzd\eta dy
\end{align*}
for $l=1,2$.
Since the other terms in $R_2\left[\varphi_{\lambda}^{(s)},u\right](s, x(s), \xi(s))$ can be estimated in the same way, we only show that
for $l=1,2$, there exists a positive constant $C_{\sigma, a, \varphi_{0}}$ such that
\begin{align}\label{il}
\int_0^{t_0}\left|I_{\alpha, l}\left(s, x^{\ast}_s, \xi^{\ast}_s, \lambda\right)\right|ds \leq C_{\sigma, a, \varphi_{0}} \lambda^{-\sigma-2 b}
\end{align}
for $\lambda \geq 1, x \in K, \xi \in \Gamma_a$ and $0 \leq s \leq t_{0}$.
For the convinience of the proof, we employ integration by parts and the fact that $\left(1-\Delta_{y}\right) e^{i y(\xi-\eta)}=\left(1+|\xi-\eta|^{2}\right) e^{i y(\xi-\eta)}$ to $I_{\alpha, l}$ then
\begin{align*}
I_{\alpha, l}\left(s, x^{\ast}_s, \xi^{\ast}_s, \lambda\right)&=\sum_{j=1}^n\xi_{s,j}^{\ast}\iiint\left(1+|\xi^{\ast}_s-\eta|^{2}\right)^{-N}\\
&\times\left(1-\Delta_{y}\right)^{N}\left[\overline{\varphi_{\lambda}^{\left(s^{\ast}\right)}\left(y-x^{\ast}_s\right)} \varphi_{\lambda}^{\left(s^{\ast}\right)}(y-z) \psi_{l}\left(\frac{\lambda^{d}\left|y-x^{\ast}_s\right|}{1+\lambda\left|s^{\ast}\right|}\right)\right.\\
&\left.\times r_{1,0}^{\alpha,j}\left(s, x^{\ast}_s, y\right)\left(y-x^{\ast}_s\right)^{\alpha}\right] W_{\varphi_{\lambda}^{\left(s^{\ast}\right)}} u(s, z, \eta) e^{-i y\left(\xi^{\ast}_s-\eta\right)} dzdy d \eta\\
&=\sum_{|\alpha_1|+\cdots+|\alpha_5|\leq 2N}C_{\alpha_1,\ldots,\alpha_5}\sum_{j=1}^n\xi_{s,j}^{\ast}\iiint\left(1+|\xi^{\ast}_s-\eta|^{2}\right)^{-N}\\
&\times\overline{\partial_y^{\alpha_1}\varphi_{\lambda}^{\left(s^{\ast}\right)}\left(y-x^{\ast}_s\right)} \partial_y^{\alpha_2}\varphi_{\lambda}^{\left(s^{\ast}\right)}(y-z) \partial_y^{\alpha_3}\psi_{l}\left(\frac{\lambda^{d}\left|y-x^{\ast}_s\right|}{1+\lambda\left|s^{\ast}\right|}\right)\\
&\times \partial_y^{\alpha_4}r_{1,0}^{\alpha,j}\left(s, x^{\ast}_s, y\right)\partial_y^{\alpha_5}\left(y-x^{\ast}_s\right)^{\alpha} W_{\varphi_{\lambda}^{\left(s^{\ast}\right)}} u(s, z, \eta) e^{-i y\left(\xi^{\ast}_s-\eta\right)} dzdy d \eta .
\end{align*}

We take $d^{\prime}$ such that $0<d^{\prime}<d$.
Since $\left|y-x^{\ast}_s\right| \leq 2\left(1+\lambda\left|s^{\ast}\right|\right) \lambda^{-d}$ if $\psi_{1}\left(\frac{\lambda^{d}\left|y-x^{\ast}_s\right|}{1+\lambda\left|s^{\ast}\right|}\right) \neq 0$, Lemma~\ref{L1} yields
\begin{align}\label{ineq:3.8}
&\left|\partial_{y}^{\alpha_4}r_{1,m}^{\alpha,j}\left(s, x^{\ast}_s, y\right)\partial_{y}^{\alpha_5}\left(y-x^{\ast}_s\right)^{\alpha}\right|\\
& \leq C\left(1+\left|x^{\ast}_s+\theta\left(y-x^{\ast}_s\right)\right|\right)^{\rho-L}\left(1+\lambda\left|s^{\ast}\right|\right)^{L} \lambda^{-d L} \notag\\
& \leq C\frac{\left(1+\left|x^{\ast}_s\right|-\left|y-x^{\ast}_s\right|\right)^{\rho+1+\ep-L}}{\left(1+\left|x^{\ast}_s\right|\right)^{1+\ep}}\left(1+\lambda\left|s^{\ast}\right|\right)^{L+1+\ep} \lambda^{-d(L+1+\ep)}\notag\\
& \leq C\left(1+\lambda\left|s^{\ast}\right|\right)^{\rho+2(1+\ep)} \lambda^{-d(L+1+\ep)}\left(1+\left|x^{\ast}_s\right|\right)^{-(1+\ep)}\notag
\end{align}
for $m=0,1$, $\lambda^{d'-1}\leq|s^{\ast}|\leq t_0$ and $\lambda\geq\lambda_1$ with some $\lambda_1\geq1$ sufficiently large.
Simple calculation and \eqref{a3} yield that 
\begin{align}
&\left\|\partial_{y}^{\beta} \varphi_{\lambda}^{\left(s^{\ast}\right)}\right\|_{L^{p}} \leq C \lambda^{b(|\beta|+(2-p)(2(n+1)-n/2))},\\ 
&\left|\partial_{y}^{\beta}\left\{\psi_{l}\left(\frac{\lambda^{d}\left|y-x^{\ast}_s\right|}{1+\lambda\left|s^{\ast}\right|}\right)\right\}\right| \leq C \lambda^{d|\beta|}\label{ineq:3.10}
\end{align}
for $y\in\R^n$, $l,p=1,2$ and $\beta\in\Z^n_{+}$.
From above inequalities \eqref{ineq:3.8}-\eqref{ineq:3.10}, Young's inequality and \eqref{a2}, the L.H.S of \eqref{il} with $l=1$ is bounded by
\begin{align*}
&C\left(1+\lambda\left|s^{\ast}\right|\right)^{\rho+2(1+\ep)} \lambda^{-d(L+1+\ep-2N)}\\
&\times\sum_{|\alpha_1+\alpha_2|\leq2N}\int_0^{t_0}\frac{\left|\xi_{s}^{\ast}\right|}{(1+|x_s^{\ast}|)^{1+\varepsilon}}ds\sup_{s\in[0,t_0]}\iiint\left(1+|\xi^{\ast}_s-\eta|^{2}\right)^{-N}\\
&\times \left|{\partial_y^{\alpha_1}\varphi_{\lambda}^{\left(s^{\ast}\right)}\left(y-x^{\ast}_s\right)}\right|\left|\partial_y^{\alpha_2}\varphi_{\lambda}^{\left(s^{\ast}\right)}(y-z)\right|
\left|W_{\varphi_{\lambda}^{\left(s^{\ast}\right)}} u(s, z, \eta)\right| d y d \eta dz\\
&\leq C_{t_0}\left(1+\lambda\left|s^{\ast}\right|\right)^{\rho+2(1+\ep)} \lambda^{-d(L+1+\ep-2N)} \\
&\times\sum_{|\alpha_1+\alpha_2|\leq 2N}\lambda^{b|\alpha_1|}\sup_{s\in[0,t_0]}\left\|\left[\left|\partial_y^{\alpha_2}\varphi_{\lambda}^{\left(s^{\ast}\right)}\right| \ast\left|W_{\varphi_{\lambda}^{\left(s^{\ast}\right)}} u(s, \cdot, \eta)\right|\right](y)\right\|_{L^2(\R^n_{y}\times\R^n_{\eta})}
\\
&\leq C_{t_0}\left(1+\lambda\left|s^{\ast}\right|\right)^{\rho+2(1+\ep)} \lambda^{-d(L+1+\ep-2N)} \lambda^{b(4(n+1)-n/2)}
\sup_{s\in[0,t_0]}\|u(s)\|_{L^2}
\end{align*}
for $\left|s^{\ast}\right| \geq \lambda^{d^{\prime}-1}$ and $\lambda \geq \lambda_1$, where $N= n+1$.

On the other hand, when $\left|s^{\ast}\right| \leq \lambda^{d^{\prime}-1}$, we have $\left|y-x^{\ast}_s\right| \leq C\left(1+\lambda\left|s^{\ast}\right|\right) \lambda^{-d} \leq C \lambda^{d^{\prime}-d}$ and this shows that
\begin{align*}
\int_0^{t_0}\left|I_{\alpha, 1}\left(s, x^{\ast}_s, \xi^{\ast}_s, \lambda\right)\right|ds&\leq C_{t_0}\lambda^{2N-\rho+b(2N+(2(n+1)-n/2))}\lambda^{-(d-d')L}.
\end{align*}
Hence combining those calculations, \eqref{il} with $l=1$ holds if we take $L$ sufficiently large.
\par
Finally, we estimate $I_{\alpha, 2}$.
By multiplying $\left(1+\left|y-x^{\ast}_s\right|^{2}\right)^{-M}\left(1+\left|y-x^{\ast}_s\right|^{2}\right)^{M}$ to the integrand of $I_{\alpha,2}$,
the inequality \eqref{a4} in Lemma \ref{L3} yields
\begin{align*}
&|\xi_{s,j}^{\ast}|\iiint\left(1+|\xi^{\ast}_s-\eta|^{2}\right)^{-N}\left(1+\left|y-x^{\ast}_s\right|^{2}\right)^{-M}\left(1+\left|y-x^{\ast}_s\right|^{2}\right)^{M}\\
&\times|\partial_y^{\alpha_1}\varphi_{\lambda}^{\left(s^{\ast}\right)}\left(y-x^{\ast}_s\right)| |\partial_y^{\alpha_2}\varphi_{\lambda}^{\left(s^{\ast}\right)}(y-z)|
\left|\partial_y^{\alpha_3}\psi_{2}\left(\frac{\lambda^{d}\left|y-x^{\ast}_s\right|}{1+\lambda\left|s^{\ast}\right|}\right)\right|\\
&\times |\partial_y^{\alpha_4}r_{1,0}^{\alpha,j}\left(s, x^{\ast}_s, y\right)||\partial_y^{\alpha_5}\left(y-x^{\ast}_s\right)^{\alpha}||W_{\varphi_{\lambda}^{\left(s^{\ast}\right)}} u(s, z, \eta)|dzdy d \eta \\
&\leq \lambda^{d|\alpha_3|}\frac{|\xi_{s}^{\ast}|}{\langle x_s^{\ast}\rangle^{L-\rho}}\iiint\left(1+|\xi^{\ast}_s-\eta|^{2}\right)^{-N}\left(1+\left|y-x^{\ast}_s\right|^{2}\right)^{-M}\left(1+\left|y-x^{\ast}_s\right|^{2}\right)^{M+L-\rho}\\
&\times|y-x^{\ast}_s|^{L-|\alpha_5|}|\partial_y^{\alpha_1}\varphi_{\lambda}^{\left(s^{\ast}\right)}\left(y-x^{\ast}_s\right)| |\partial_y^{\alpha_2}\varphi_{\lambda}^{\left(s^{\ast}\right)}(y-z)|
|W_{\varphi_{\lambda}^{\left(s^{\ast}\right)}} u(s, z, \eta)|dzdy d \eta \\
&\leq C\sum_{\substack{L\leq|\beta+\gamma|\leq M+2L}}|s^{\ast}|^{|\gamma|}\lambda^{d|\alpha_3|+b(|\alpha_1|+|\alpha_2|+|\alpha_5|+|\gamma|-|\beta|)}\frac{|\xi_{s}^{\ast}|}{\langle x_s^{\ast}\rangle^{L-\rho}}\\
&\times\iiint\left(1+|\xi^{\ast}_s-\eta|^{2}\right)^{-N}\left(1+\left|y-x^{\ast}_s\right|^{2}\right)^{-M}\left|\left(\varphi^{\alpha_1+\gamma,\beta-\alpha_5}\right)_{\lambda}^{\left(s^{\ast}\right)}\left(y-x^{\ast}_s\right)\right| \\
&\times\left|\left(\varphi^{\alpha_2,0}\right)_{\lambda}^{\left(s^{\ast}\right)}(y-z)\right|
|W_{\varphi_{\lambda}^{\left(s^{\ast}\right)}} u(s, z, \eta)|dzdy d \eta.
\end{align*}
Since $\left|y-x^{\ast}_s\right| \geq \lambda^{-d}\left(1+\lambda\left|s^{\ast}\right|\right)$ if $\displaystyle\psi_{2}\left(\frac{\lambda^{d}\left|y-x^{\ast}_s\right|}{1+\left|s^{\ast}\right| \lambda}\right) \neq 0$, we have with $M=m+n+1$ and $N=n+1$
\begin{align*}
\left|I_{\alpha, 2}\right|
&\leq \sum_{\substack{\left|\alpha_{1}+\cdots+\alpha_{5}\right| \leq 2 N\\ L\leq|\beta+\gamma|\leq M+2L}}|s^{\ast}|^{|\gamma|}\lambda^{d|\alpha_3|+b(|\alpha_1|+|\alpha_2|+|\alpha_5|+|\gamma|-|\beta|)}\frac{|\xi_{s}^{\ast}|}{\langle x_s^{\ast}\rangle^{L-\rho}}\\
&\times\left(1+\lambda^{-2 d}\left(1+\lambda\left|s^{\ast}\right|\right)^{2}\right)^{-m} \left\|\left(1+|y|^{2}\right)^{-n-1}\right\|_{L_{y}^{1}}\left\|\left(1+|\eta|^{2}\right)^{-n-1}\right\|_{L_{\eta}^{1}}\\
&\times\left\|\varphi_0^{\alpha_1+\gamma,\beta-\alpha_5}\right\|_{L_{y}^{2}}\left\|\varphi_0^{\alpha_2,0}\right\|_{L_{z}^{2}}\left\|W_{\varphi_{\lambda}^{\left(s^{\ast}\right)}} u(s, z, \eta)\right\|_{L_{z, \eta}^{2}}.
\end{align*}
For $0 \leq |s^{\ast}| \leq \lambda^{-2 b}$, we have $\left|s^{\ast}\right| \lambda^{b} \leq \lambda^{-b}$. Hence we obtain
$$
\begin{aligned}
\int_0^{t_0}\left|I_{\alpha, 2}\right|ds
& \leq C_{t_0} \lambda^{-b(L-2 N)}=C_{t_0} \lambda^{-b(L-2(n+1))} \leq C_{t_0} \lambda^{-2 b-\sigma},
\end{aligned}
$$
if we take $L\in\N$ being grater than $N+2 n+4+\sigma / b$.
For $\lambda^{-2 b} \leq |s^{\ast}| \leq t_{0}$, we have
\begin{align*}
&\sum_{\substack{\left|\alpha_{1}+\cdots+\alpha_{5}\right| \leq 2 N\\ L \leq|\beta+\gamma| \leq M+2L}} C\left(1+\lambda^{-2 d}\left(1+\lambda\left|s^{\ast}\right|\right)^{2}\right)^{-m}\left(\lambda^{b}\left|s^{\ast}\right|\right)^{|\gamma|} \lambda^{b\left(2N-|\beta|\right)} \lambda^{d\left|\alpha_{3}\right|} \\
&\leq C\left(1+\left(\lambda^{1-d-2 b}\right)^{2}\right)^{-m} \lambda^{b(2 M+2 N+L)}\\
&\leq C \lambda^{-2 m(1-d-2 b)} \lambda^{b(2 m+4(n+1)+L)} \\
&\leq C \lambda^{-2 m(1-d-3 b)} \lambda^{b(4(n+1)+L)}.
\end{align*}
Since $1-d-3b>1-4 b \geq 0$, we have $\left|I_{\alpha, 2}\right| \leq C \lambda^{-2 b-\sigma}$, if we take $m$ sufficiently large. This shows \eqref{il} with $l=2$ for $x \in K, \xi \in \Gamma_a$, $\lambda \geq 1$ and $0 \leq s \leq t_{0}$.
\end{proof}
\par
\section{Proof of Theorem \ref{mt1}}\label{sec4}
In this section, we will prove Theorem \ref{mt1}.
\par
\begin{proof}[Proof of Theorem \ref{mt1}]
It is trivial that (ii) implies (iii).
We only show that (iii) implies (i) since we can show that (i) implies (ii) in the similar way.
Let $K$, $\Gamma\subset\R^n$ and $\varphi_0\in\CAL{S}(\R^n)$ be a neighborhood of $x_0$, a conic neighborhood of $\xi_0$ and the basic wave packet satisfying \eqref{main} stated in (iii), respectively.
It suffices to show that the following assertion $P\left(\sigma\right)$ holds for all $\sigma \geq 0$ under the condition of (iii).
\par
$P\left(\sigma\right): ``$ There exists a positive constant $C_{\sigma, a, \varphi_{0}}$ such that
\begin{align}\label{assertion}
\left|W_{\varphi_{\lambda}^{\left(t-t_{0}\right)}} u\left(t, x\left(t ; t_{0}, x, \lambda \xi\right), \xi\left(t ; t_{0}, x, \lambda \xi\right)\right)\right| \leq C_{\sigma, a, \varphi_{0}} \lambda^{-\sigma}
\end{align}
for all $x \in K$, $\xi \in \Gamma_a$, $\lambda \geq 1$ and $0 \leq t \leq t_{0}$."
\par
In fact, taking $t=t_{0}$, we have $\varphi_{\lambda}^{\left(t_{0}-t_{0}\right)}=\varphi_{0,\lambda}$, 
$x\left(t_{0} ; t_{0}, x, \lambda \xi\right)=x$ and\\
$\xi\left(t_{0} ; t_{0}, x, \lambda \xi\right)=\lambda \xi$. Therefore, from \eqref{assertion}, we have immediately
$$
\left|W_{\varphi_{0,\lambda}} u\left(t_{0}, x, \lambda \xi\right)\right| \leq C_{\sigma, a, \varphi_{0}} \lambda^{-\sigma}
$$
for $\lambda \geq 1$, $x \in K$ and $\xi \in \Gamma_a$.
This and Proposition \ref{follandokaji} imply (i).
We show by inductive method with respect to $\sigma$ that $P\left(\sigma\right)$ holds for all $\sigma \geq 0$.
\par
First we show that $P\left(0\right)$ holds. Since $u_{0}(x) \in L^{2}\left(\mathbb{R}^{n}\right)$ and then $u(t, x) \in C\left(\mathbb{R} ; L^{2}\left(\mathbb{R}^{n}\right)\right)$, Schwarz's inequality and the conservation of $L^{2}$ norm of solutions of \eqref{eq1} show that
\begin{align*}
&\left|W_{\varphi_{\lambda}^{\left(t-t_{0}\right)}} u\left(t, x\left(t ; t_{0}, x, \lambda \xi\right), \xi\left(t ; t_{0}, x, \lambda \xi\right)\right)\right| \\
&\leq \int\left|\varphi_{\lambda}^{\left(t-t_{0}\right)}\left(y-x\left(t ; t_{0}, x, \lambda \xi\right)\right)\right|\left| u(t, y)\right| d y \\
&\leq\left\|\varphi_{\lambda}^{\left(t-t_{0}\right)}(\cdot)\right\|_{L^{2}}\|u(t, \cdot)\|_{L^{2}} \\
&=\left\|\varphi_{\lambda}(\cdot)\right\|_{L^{2}}\left\|u_{0}(\cdot)\right\|_{L^{2}}=\left\|\varphi_{0}(\cdot)\right\|_{L^{2}}\left\|u_{0}(\cdot)\right\|_{L^{2}},
\end{align*}
which shows $P\left(0\right)$ is valid.
\par
Next we show that for a fixed $P\left(\sigma+2 b\right)$ holds under the assumption that $P\left(\sigma\right)$ holds. To do so, it suffices to show that there exists a positive constant $C_{a, \varphi_{0}}$ such that
\begin{align}\label{remainder}
\left|R\left[\varphi_{\la}^{(s)},u\right]\left(s, x^{\ast}_s, \xi^{\ast}_s\right)\right| \leq C_{a, \varphi_{0}} \lambda^{-(\sigma+2 b)}
\end{align}
for all $x \in K$, $\xi \in \Gamma_a$, $\lambda \geq 1$ and $0 \leq s \leq t_{0}$, since the first term of the right hand side of \eqref{fo} is estimated by $C \lambda^{-(\sigma+2 b)}$ from the assumption on $u_{0}$.

Indeed, \eqref{remainder} can be shown by combining Lemma \ref{aj} and \ref{R2} if we take a sufficiently large $L\in\N$.
\end{proof}
\par
\section{Proof of Corollaries}
\begin{proof}[Proof of Corollary \ref{cor1}]
First, we shortly note that if the initial value problem \eqref{eq1} is well-posed (locally in time) on $\CAL{H}^k$ for $k\in\Z$, where $\CAL{H}^k=\{f\in\CAL{S}'(\R^n)\mid \sum_{|\alpha+\beta|\leq k}$\\
$x^{\alpha}\partial_{x}^{\beta}f\in L^2(\R^n)\}$
for $k\geq0$ and $\CAL{H}^{-k}$ is defined as its dual space, we can apply Theorem~\ref{mt1} to the solution $u(t)$ of \eqref{eq1}, which belongs to $C(\R;\CAL{H}^{k})$, with small modifications: we just replace all $\varphi^{(t)}_{\lambda}$'s appearing in the statement and the proof of the theorem with $\varphi_{\lambda,k}^{(t)}=\lambda^{kb}\varphi_{\lambda}^{(t)}$ when $k<0$.
Consequently we can analyze $WF(E(t_0,\cdot,0))$ by Theorem~\ref{mt1} since \eqref{eq1} is indeed well-posed on $\CAL{H}^k$ for any $k\in\Z$ under our assumption on $\bm{a}(t,x)$ and there exists some $k_0\in\Z_{\leq0}$ such that $\delta_0\in H^{k_0}(\R^n)\subset \CAL{H}^{k_0}$. 

We can assume $t_0>0$ without any loss of generality.
Let $(x_0, \xi_0)\in \R^n\times \R^n\setminus\{0\}$ and $\varphi_0(x)=e^{-|x|^2/2}.$
From Theorem \ref{mt1}, it suffices to show that there exist a neighborhood $K$ of $x_0$ and a conic neighborhood $\Gamma$ of $\xi_0$ such that for all $N\in\N$ and $a\geq1$, there exists a constant $C_{N,a}>0$ satisfying
\begin{align*}
\left|W_{\varphi_{\la}^{(-t_0)}}[\delta_0](x(0;t_0,x,\la\xi),\xi(0;t_0,x,\la\xi))\right|\leq C_{N,a}\la^{-N}
\end{align*}
for $\la\geq1$, $x\in K$ and $\xi\in\Gamma_a=\{\xi\in\Gamma|a^{-1}\leq|\xi|\leq a\}$, where $x(0)=x(0;t_0,x,\la\xi),$ $\xi(0)=\xi(0;t_0,x,\la\xi)$ are solutions to \eqref{ce}.
From the definition of wave packet transform, the left hand side of the above can be rewritten as
\begin{align*}
\left|W_{\varphi_{\la}^{(-t_0)}}[\delta_0](x(0;t_0,x,\la\xi),\xi(0;t_0,x,\la\xi))\right|&=|\varphi_{\la}^{(-t_0)}(-x(0))|\\
&=\left|\int e^{it_0\eta^2/2}\widehat{\varphi_{0,\la}}(\eta)e^{-ix(0)\cdot\eta}d\eta\right|.
\end{align*}
Since $\widehat{\varphi_{0,\la}}(\eta)=\la^{-nb/2}e^{-|\eta|^2/(2\la^{2b})}$,
the right hand side in the last equality of the above can be calculated as
\begin{align*}
\left|\int e^{it_0\eta^2/2}\widehat{\varphi_{0,\la}}(\eta)e^{-ix(0)\cdot\eta}d\eta\right|&=\left|e^{-|x(0)|^2/(2\Lambda)}\int e^{-\frac{\Lambda}{2}\left(\eta+i\frac{x(0)}{\Lambda}\right)^2}d\eta\right|\\
&=C_n\lambda^{-nb/2}|\Lambda|^{n/2}e^{-|x(0)|^2/(2|\Lambda|)},
\end{align*}
where $\Lambda=\la^{-2b}-it_0$.

Since the Gaussian is Schwartz function, we only need to show $|x(0)|^2/(2|\Lambda|)$ has a growth of $\lambda$.
Taking $\lambda$ sufficiently large, we  may estimate $|x(0)|$ as
\begin{align*}
|x(0)|&\geq \lambda t_0|\xi|-|x|-\int_{0}^{\lambda^{p-1}}|s^{\ast}|\left|(\nabla_x\bm{a})^{T}(s,x(s^{\ast}))\right|\left(|\xi(s^{\ast})|+|\bm{a}(s,x(s^{\ast}))|\right)ds\\
&-\int_{\lambda^{p-1}}^{t_0}|s^{\ast}|\left|(\nabla_x\bm{a})^{T}(s,x(s^{\ast}))\right|\left(|\xi(s^{\ast})|+|\bm{a}(s,x(s^{\ast}))|\right)ds-\int^{t_0}_0\left|\bm{a}(s,x(s^{\ast}))\right|ds
\end{align*}
for $0<p<1$.
Employing Lemma~\ref{L1} and its proof, we have
\begin{align*}
&\int_{\lambda^{p-1}}^{t_0}|s^{\ast}|\left|(\nabla_x\bm{a})^{T}(s,x(s^{\ast}))\right|\left(|\xi(s^{\ast})|+|\bm{a}(s,x(s^{\ast}))|\right)ds\\
&\leq (2a)^2\lambda^{\rho}\langle t_0\rangle^{1+\rho}+(2a)^{2\rho}\lambda^{2\rho-1}\langle t_0\rangle^{2\rho+1}\\
&\leq\lambda\left(\frac{(2a)^2\langle t_0\rangle^{1+\rho}}{\lambda^{1-\rho}} +\frac{(2a)^{2\rho}\langle t_0\rangle^{2\rho+1}}{\lambda^{2(1-\rho)}}\right)\leq \lambda\frac{t_0}{4a}
\end{align*}
for sufficiently large $\lambda$.
On the other hand, since for sufficiently small $|\tau^{\ast}|$, it holds that
\begin{align*}
|x(\tau^{\ast})-x-\la\tau^{\ast}\xi|+|\xi(\tau^{\ast})-\la\xi|\leq C|\tau^{\ast}|(1+|x|+\la|\xi|),
\end{align*}
which is a byproduct obtained in the process of the Picard successive iteration,
we have
\begin{align*}
&\int_{0}^{\lambda^{p-1}}|s^{\ast}|\left|(\nabla_x\bm{a})^{T}(s,x(s^{\ast}))\right|\left(|\xi(s^{\ast})|+|\bm{a}(s,x(s^{\ast}))|\right)ds\\
&\leq C\lambda^{p-1}\int_{0}^{\lambda^{p-1}}\langle s^{\ast}\rangle\frac{1+\lambda|\xi|+|x|}{\langle x(s^{\ast})\rangle^{1-\rho}}ds
+C\lambda^{p-1}\int_{0}^{\lambda^{p-1}}\langle x(s^{\ast})\rangle^{2\rho-1}ds\\
&\leq C\lambda^{p-1}\int_{0}^{\lambda^{p-1}}\frac{\lambda|\xi|\langle s^{\ast}\rangle^{1-\rho}(1+|x|+\lambda|\xi|)^{1-\rho}}{\langle x+\lambda s^{\ast}\xi\rangle^{1-\rho}}ds\\
&+C_{t_0}\lambda^{p-1}(1+|x|+\lambda|\xi|)^{2\rho-1}\int_{0}^{\lambda^{p-1}}\langle x+\lambda s^{\ast}\xi\rangle^{2\rho-1}ds\\
&\leq C_{t_0,a,K}\lambda^{p-\rho}\int_{0}^{\lambda^{p-1}}\frac{\lambda|\xi|}{\langle x+\lambda s^{\ast}\xi\rangle^{1-\rho}}ds\\
&+C_{t_0,a,K}\lambda^{p-1+2\rho-1}\int_{0}^{\lambda^{p-1}}\langle x+\lambda s^{\ast}\xi\rangle^{2\rho-1}ds.
\end{align*}
In the last inequality of the above, the change of a variable $s\mapsto\sigma=|x|- \lambda|\xi|s^{\ast}$ lets the integrals change to
\begin{align*}
\int_{|x|+\lambda{t_0}|\xi|}^{|x|+(\lambda^p-\lambda t_0)|\xi|}\langle\sigma\rangle^{\rho-1}d\sigma
+\lambda^{-1}\int_{|x|+\lambda{t_0}|\xi|}^{|x|+(\lambda^p-\lambda t_0)|\xi|}\langle\sigma\rangle^{2\rho-1}d\sigma\leq C_{a,t_0,K}(\lambda^{\rho}+\lambda^{2\rho-1})
\end{align*}
since $p<1$.
Thus one can see that
\begin{align*}
&\int_{0}^{\lambda^{p-1}}|s^{\ast}|\left|(\nabla_x\bm{a})^{T}(s,x(s^{\ast}))\right|\left(|\xi(s^{\ast})|+|\bm{a}(s,x(s^{\ast}))|\right)ds\\
&\leq C_{t_0,a,K}(\lambda^{p}+\lambda^{p+4\rho-3})
=C_{t_0,a,K}\lambda\left({\lambda^{p-1}}+\lambda^{p+4\rho-4}\right)
\leq \lambda \frac{t_0}{4a}
\end{align*}
when we take $p$ satisfying $0<p<\min(4({1-\rho}),1)$ and the last inequality is actually valid if $\lambda$ is sufficiently large.
Since $\int_0^{t_0}|\bm{a}(s,x(s^{\ast}))|ds$ can be estimated similarly,
we obtain that
\begin{align*}
|x(0)|
\geq\lambda t_0|\xi|-|x|-\lambda\frac{t_0}{4a}-\lambda\frac{t_0}{4a}-\lambda\frac{t_0}{4a}
=O(\lambda)\ \text{as}\ \lambda\to\infty
\end{align*}
for any $x\in K$ and $\xi\in\Gamma_a$.
Since $|\Lambda|\sim\lambda^{-2b}$,
$|x(0)|=O(\lambda^{M})$ holds for some $M>0$,
which concludes our desired assertion.
\end{proof}
\setcounter{section}{0}
\renewcommand{\thesection}{\Alph{section}}
\section{Appendix}
\begin{lem}\label{L1}
Assume $H(t)$ of \eqref{eq1} is $\frac{1}{2}(\nabla-\bm{a}(t,x))^2$ and $\bm{a}(t,x)$ satisfies Assumption~\ref{A2}.
Let $0<p<1$, $a\geq1$, $t_0\in\R$, $x_0, \xi_0\in\R^n$, $K$ be a (relatively compact) neighborhood of $x_0$ and $\Gamma$ be a conic neighborhood of $\xi_0$.
There exists a positive constant $\lambda_0$ such that
\begin{align}\label{a1}
\begin{cases}
\frac{1}{2a}\left|s^*\right|\lambda\leq\left|x(s^*)\right|\leq2a|s^*|\lambda\\
\frac{1}{2a}\lambda\leq\left|\xi(s^*)\right|\leq2a\lambda
\end{cases}
\end{align}
for all $\la\geq\la_0$, $\la^{p-1}\leq|s^*|\leq t_0$, $x\in K$ and $\xi\in\Gamma_a$.
\end{lem}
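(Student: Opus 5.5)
The approach is a bootstrap on the Hamiltonian system \eqref{ce}. In the kinetic variables $v(s)=\xi(s)-\bm a(s,x(s))$, with $x(t_0)=x\in K$ and $\xi(t_0)=\la\xi$, $\xi\in\Gamma_a$, we have
\begin{align*}
\dot x(s)=v(s),\qquad \dot v(s)=B(s,x(s))v(s)-(\partial_s\bm a)(s,x(s)).
\end{align*}
Here $B$ is the antisymmetric magnetic matrix. Hence $\frac{d}{ds}|v|^2=-2v\cdot\partial_s\bm a$. As Assumption \ref{A2} does not directly control $\partial_s\bm a$, I would first try to avoid this term and use \eqref{ce} directly:
\begin{align*}
\dot\xi(s)=(\nabla_x\bm a)^T(s,x(s))\bigl(\xi(s)-\bm a(s,x(s))\bigr).
\end{align*}
This gives $|\dot\xi|\le C\langle x\rangle^{\rho-1}(|\xi|+\langle x\rangle^{\rho})$ and $\dot x=\xi-\bm a$. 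The plan is a continuity argument on the set of $s^*$ where the bounds \eqref{a1} hold with constants $a$ and $4a$ in place of $2a$. On this set the improved bounds with $2a$ are then derived.

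The first step is the short-time regime $|s^*|\le\la^{p-1}$. A Picard/Gronwall estimate gives
\begin{align*}
|x(s^*)-x-\la s^*\xi|+|\xi(s^*)-\la\xi|\le C|s^*|(1+|x|+\la|\xi|).
\end{align*}
It follows that $|\xi(s^*)|=\la|\xi|(1+O(\la^{p-1}))$. Also $x(s^*)\approx x+\la s^*\xi$, and at $|s^*|=\la^{p-1}$ this has modulus $\sim \la^{p}|\xi|$. This is larger than $|x|$ because $p>0$. It provides the initial data for the continuity argument at $|s^*|=\la^{p-1}$.

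The second step covers $\la^{p-1}\le|s^*|\le t_0$. Assume the bootstrap bounds $|\xi(\tau)|\le 4a\la$ and $|x(\tau)|\ge |\tau^*|\la/(4a)$ on the interval. Integrating gives
\begin{align*}
|\xi(s^*)-\la\xi|\le C\int\langle \tau^*\la\rangle^{\rho-1}\la\,d\tau\lesssim \la^{\rho}\langle t_0\rangle^{\rho}+\la^{p\rho}\ll\la.
\end{align*}
The contribution from the region $|\tau^*|\le\la^{p-1}$ is handled by the first step. Therefore $|\xi(s^*)|\in[\la/(2a),2a\la]$ once $\la$ is large, using $a^{-1}\le|\xi|\le a$. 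Next,
\begin{align*}
x(s^*)=x+\int\bigl(\xi(\tau)-\bm a(\tau,x(\tau))\bigr)d\tau=x+\la s^*\xi+O\bigl(|s^*|(\la^{\rho}+\la^{\rho})\bigr),
\end{align*}
using $|\bm a|\lesssim\langle x\rangle^\rho\lesssim\la^\rho$. Since $|s^*|\la\ge\la^{p}\gg|x|$ for $x\in K$, we get $|x(s^*)|\in[|s^*|\la/(2a),2a|s^*|\la]$. A minor point: the lower bound $1/(2a)$ requires $|\xi|\ge 1/a$ with slack. The constant loss from the errors is absorbed because the errors are $o(1)$ relative to the main term.

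The main obstacle is closing the bootstrap. The $\dot\xi$ integral uses the lower bound on $|x(\tau)|$, which itself depends on $\xi$ staying close to $\la\xi$. Its integrability near the left endpoint relies on $\rho<1$, through $\int_{\la^{p-1}}\la(\la\tau)^{\rho-1}d\tau\sim\la^{\rho}$. I would first try the continuity argument with the strictly weaker constants $4a$ and $1/(4a)$ as stated above. Choosing $\la_0$ so that every error term is at most $\la/(8a)$ then strictly improves these to $2a$ and $1/(2a)$. Since all constants depend only on $a,t_0,K,\rho$ and the $C_\alpha$, $\la_0$ is uniform in $x\in K$ and $\xi\in\Gamma_a$.
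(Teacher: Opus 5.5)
Your argument is correct, but it closes the estimates by a different mechanism from the paper's.

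The paper never estimates the exact trajectory directly. It proves \eqref{a1} for every Picard iterate $(x^{(l)},\xi^{(l)})$, starting from the free flow $x^{(1)}=x+s^*\lambda\xi$, $\xi^{(1)}=\lambda\xi$. This is done by induction on $l$ with $\lambda_0$ independent of $l$, and $l\to\infty$ is taken implicitly. You instead run a continuity argument on the exact flow, with the weakened constants $4a$, $1/(4a)$ as bootstrap hypotheses, and you treat the layer $|s^*|\le\lambda^{p-1}$ separately by Gronwall.

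The analytic input is the same in both:
\begin{enumerate}
\item working with $\dot\xi=(\nabla_x\bm{a})^T(\xi-\bm{a})$ rather than the kinetic momentum, so $\partial_t\bm{a}$ never appears;
\item $\int\lambda\langle\lambda\tau^*\rangle^{\rho-1}d\tau=o(\lambda)$, because $\rho<1$;
\item $|\bm{a}(\tau,x(\tau))|\lesssim\lambda^{\max(\rho,0)}$;
\item $|x|\ll\lambda^{p}\le\lambda|s^*|$, because $p>0$.
\end{enumerate}

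What each route buys: the Picard route needs no openness/closedness step, since each iterate is estimated from bounds already proved for the previous one. However, it leaves two points implicit. First, the iterates must converge on all of $[0,t_0]$; the vector field is not globally Lipschitz, because $\partial_x$ of $(\nabla_x\bm{a})^T(\xi-\bm{a})$ contains $\nabla_x^2\bm{a}\,\xi$, so convergence itself rests on the uniform bounds. Second, its integrals from $t_0$ run through the layer $|\tau^*|<\lambda^{p-1}$, where its induction hypothesis says nothing. Your version handles both points explicitly.

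There are a few details to tighten.
\begin{enumerate}
\item Your first-order bound $|x(s^*)-x-\lambda s^*\xi|\le C|s^*|(1+|x|+\lambda|\xi|)$ is of the same size as $\lambda|s^*||\xi|$. By itself it does not give $|x(s^*)|\sim\lambda^{p}|\xi|$ at $|s^*|=\lambda^{p-1}$. Integrate the $\xi$-bound once more to get $O(|s^*|^2\lambda+|s^*|\lambda^{\max(\rho,0)})$; this is what your second-step formula does anyway.
\item In that second-step formula, the $x$-error should also include the $O(|s^*|\lambda^{p})$ contribution coming from the initial layer.
\item The bound on $|\bm{a}(\tau,x(\tau))|$ needs an upper bound on $|x(\tau)|$. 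Either keep $|x(\tau)|\le 4a\lambda|\tau^*|$ among the bootstrap hypotheses, or use the crude Gronwall bound $|x(\tau)|\lesssim\lambda$.
\item For $\rho=0$ the $\xi$-integral is $O(\log\lambda)$ rather than $O(\lambda^{\rho})$, which is still $o(\lambda)$.
\end{enumerate}
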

\begin{proof}
We adopt induction for the Picard approximation of solutions $x(s^{\ast})$ and $\xi(s^{\ast})$ of \eqref{ce} given as
\begin{align*}
\begin{cases}
x^{(1)}(s^{\ast})&=x+s^{\ast}\lambda\xi,\\
\xi^{(1)}(s^{\ast})&=\lambda\xi,\\
x^{(l)}(s^{\ast})&=x+\int_{t_0}^{s}(\nabla_{\xi}h)(\tau, x^{(l-1)}(\tau^{\ast}), \xi^{(l-1)}(\tau^{\ast}))d\tau,\quad l\geq2,\\
\xi^{(l)}(s^{\ast})&=\lambda\xi-\int_{t_0}^{s}(\nabla_{x}h)(\tau, x^{(l-1)}(\tau^{\ast}), \xi^{(l-1)}(\tau^{\ast}))d\tau, \quad l\geq2,
\end{cases}
\end{align*}
namely, we simply show \eqref{a1} for $x^{(l)}(s^{\ast})$ and $\xi^{(l)}(s^{\ast})$ for any $l\in\N$ instead of $x(s^{\ast})$ and $\xi(s^{\ast})$ by induction.
For $l=1$, \eqref{a1} holds obviously. 
For $l=2$, since $\rho<1$, $\lambda|s^{\ast}|\geq\lambda^p$ and $(\nabla_{\xi}h)(\tau,x^{(l)}(\tau^{\ast}),\xi^{(1)}(\tau^{\ast}))=\xi^{(l-1)}(\tau^{\ast})-\bm{a}(\tau,x^{(l-1)}(\tau^{\ast}))$, we have
\begin{align*}
|x^{(2)}(s^{\ast})|&\geq |x+s^{\ast}\lambda\xi|-\int_{t_0}^s|\bm{a}(\tau,x^{(1)}(\tau^{\ast}))|d\tau\\
&\geq \lambda|s^{\ast}||\xi|-|x|-\int_{t_0}^s\langle 2a\lambda|\tau^{\ast}|\rangle^{\rho} d\tau\\
&\geq \lambda|s^{\ast}||\xi|-|x|-(2a\lambda)^{\rho}\int_{t_0}^s\langle \tau^{\ast}\rangle^{\rho} d\tau\\
&\geq \frac{1}{2a}\lambda|s^{\ast}|\left(1-\frac{2a|x|}{\lambda^p}-(2a)^{1+\rho}\lambda^{p(\rho-1)}\right)
\geq \frac{1}{2a}\lambda|s^{\ast}|
\end{align*}
for $\lambda\geq\lambda_0$ with some sufficiently large $\lambda_0\geq1$.
Meanwhile, we estimate $\xi^{(2)}(s^{\ast})$ as 
\begin{align*}
|\xi^{(2)}(s^{\ast})|&\geq \left|\lambda\xi-\int_{t_0}^s(\nabla_x\bm{a})^T(\tau,x^{(1)}(\tau^{\ast}))(\lambda\xi-\bm{a}(\tau,x^{(1)}(\tau^{\ast})))d\tau\right|\\
&\geq \frac{1}{2a}\lambda\left(1-(2a)^2\int_{t_0}^s|(\nabla_x\bm{a})^T(\tau,x^{(1)}(\tau^{\ast}))|d\tau-\frac{2a}{\lambda}\int_{t_0}^s\langle2a\lambda|\tau^{\ast}|\rangle^{2\rho-1}d\tau\right)\\
&\geq \frac{1}{2a}\lambda\left(1-\frac{(2a)^{1+\rho}|s^{\ast}|}{\lambda^{1-\rho}}-\frac{(2a)^{2\rho}}{\lambda^{2(1-\rho)}}\langle s^{\ast}\rangle^{2\rho}\right)\geq \frac{1}{2a}\lambda
\end{align*}
for $\lambda\geq\lambda_0$ with some sufficiently large $\lambda_0\geq1$, since $-(\nabla_{x}h)(\tau,x^{(l)}(\tau^{\ast}),\xi^{(l)}(\tau^{\ast}))=(\nabla_x\bm{a})^T(\tau,x^{(l-1)}(\tau^{\ast}))(\xi^{(l-1)}(\tau^{\ast})-\bm{a}(\tau,x^{(l-1)}(\tau^{\ast})))$. We can show $|x^{(2)}(s^{\ast})|\leq 2a\lambda|s^{\ast}|$ and $|\xi^{(2)}(s^{\ast})|\leq 2a\lambda$ in the same way.

Finally, suppose that $x^{(l)}(s^{\ast})$ and $\xi^{(l)}(s^{\ast})$ satisfy \eqref{a1} for all $l\leq N$ with some $N\geq 2$.
Inspired by the above calculation, $x^{(N+1)}(s^{\ast})$ can be estimated as
\begin{align*}
&|x^{(N+1)}(s^{\ast})|\\
&=\left|x+s^{\ast}\lambda\xi\right|-\int_{t_0}^{s}|\bm{a}(\tau,x^{(N-1)}(\tau))|d\tau\\
&-\int_{t_0}^{s}|s-\tau||(\nabla_{x}\bm{a})^T(\tau, x^{(N-2)}(\tau))|\left(| \xi^{(N-2)}(\tau)|+|\bm{a}(\tau,x^{(N-2)}(\tau))|\right)d\tau\\
&\geq \frac{1}{2a}\lambda|s^{\ast}|\left(1-\frac{2a|x|}{\lambda^p}-(2a)^{1+\rho}\lambda^{p(\rho-1)}\right)
\\
&-\frac{1}{2a}\lambda\left(1-\frac{|s^{\ast}|}{(2a\lambda)^{1-\rho}}-\frac{(2a)^{\rho}}{\lambda^{2(1-\rho)}}\langle s^{\ast}\rangle^{2\rho}\right)\int_{t_0}^{s}|s-\tau|d\tau
\geq \frac{1}{2a}\lambda|s^{\ast}|
\end{align*}
for $\lambda\geq\lambda_0$ with some sufficiently large $\lambda_0\geq1$, which is independent of $N$. We can show $|x^{(N+1)}(s^{\ast})|\leq 2a\lambda|s^{\ast}|$ and $\frac{1}{2a}\lambda\leq|\xi^{(N+1)}(s^{\ast})|\leq 2a\lambda$ in the same way.

\end{proof}
\begin{lem}\label{L2}
Let $\delta>0$. Then there exists a constant $C_{\delta}>0$ such that for any $\tau, t\in\mathbb{R}$, $x,\xi\in\mathbb{R}^n$ and compact interval $I=[a,b]\subset \R$ $(-\infty<a<b<\infty)$, it holds that
\begin{align}\label{a2}
\int_I\frac{\left|\xi(\tau;t,x,\xi)\right|}{\langle x(\tau;t,x,\xi)\rangle^{1+\delta}}d\tau\leq C_{\delta}(1+|b-a|),
\end{align}
where $x(\tau)=x(\tau; t, x, \xi)$ and $\xi(\tau)=\xi(\tau; t, x, \xi)$ as the solutions of \eqref{ce}.
\end{lem}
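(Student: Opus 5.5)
Lemma~\ref{L2} says that the magnetic force cannot keep a trajectory of bounded radius moving fast for long, except on a set of times of length $O(1+|b-a|)$. The plan is to exploit the exact conservation of kinetic energy for the flow \eqref{ce}, together with a virial-type monotone quantity.

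\textbf{Step 1: work with the kinetic momentum.} Set $v(\tau)=\xi(\tau)-\bm{a}(\tau,x(\tau))=\dot x(\tau)$. Differentiating gives
\[
\dot v=B(\tau,x)v-\partial_t\bm{a}(\tau,x),
\]
and $B$ is antisymmetric. The velocity $v$ is the natural object, and $|\xi(\tau)|\le|v(\tau)|+C\langle x(\tau)\rangle^{\rho}$. The term $\langle x\rangle^{\rho-1-\delta}$ is integrable in $\tau$ only with a bound $C|I|$. Since $\rho-1-\delta<0$, this second contribution is at most $C|I|$. So it suffices to bound $\int_I|v|\langle x\rangle^{-1-\delta}d\tau$.

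\textbf{Step 2: handle the time dependence of $\bm{a}$.} If $\bm{a}$ does not depend on $t$, then $|v|$ is exactly constant, say $|v|=\nu$. Otherwise I will need a bound on $\partial_t\bm{a}$ that is not in Assumption~\ref{A2}. I would argue that $\langle x\rangle^{\rho}$-type growth keeps $|v|$ comparable to a constant on $I$, up to errors absorbed into $C(1+|b-a|)$. This is a point I expect to have to treat carefully.

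\textbf{Step 3: the virial functional.} With $|v|\approx\nu$, consider
\[
F(\tau)=\frac{x\cdot v}{\langle x\rangle}.
\]
We have $\frac{d}{d\tau}(x\cdot v)=|v|^2+x\cdot B v$. The term $x\cdot Bv$ is bounded by $C\langle x\rangle^{\rho}\nu$. Hence
\[
\dot F\ge\frac{\nu^2-(\text{radial part})^2}{\langle x\rangle}-C\nu\langle x\rangle^{\rho-1},
\]
where the radial part is $(x\cdot v/\langle x\rangle)^2$.

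The trouble is that the favourable term only controls $\nu^2/\langle x\rangle$ in the non-radial directions. Instead I would use $G(\tau)=\langle x\rangle^{-\delta}\,x\cdot v/\langle x\rangle$, or more standardly $\frac{d}{d\tau}\bigl(\frac{x}{\langle x\rangle}\cdot v\bigr)$. This derivative equals $\frac{|v|^2}{\langle x\rangle}-\frac{(x\cdot v)^2}{\langle x\rangle^3}+O(\nu\langle x\rangle^{\rho-1})$.

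In the regime $\langle x\rangle\lesssim\nu$, the trajectory passes through the ball $\{\langle x\rangle\le R\}$ in time about $R/\nu$. Splitting dyadically into shells $\langle x\rangle\sim 2^k$, each shell contributes at most
\[
\nu\cdot 2^{-k(1+\delta)}\cdot(\#\text{visits})\cdot\frac{2^k}{\nu}.
\]

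\textbf{Step 4: count the visits.} The number of visits to the $k$-th shell during $I$ is at most $1+C|I|\nu 2^{-k}$. To prove this I would use the convexity coming from the virial identity: $\langle x\rangle^2$ has second derivative $2\nu^2+O(\nu\langle x\rangle^{\rho})$. For $\langle x\rangle\lesssim\nu^{1/(1-\rho)}$ this second derivative is positive, so the trajectory enters and leaves such a ball essentially once. Large radii are handled trivially, since there $\langle x\rangle^{-1-\delta}\nu$ is already small.

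Summing over $k$ then gives $\sum_k 2^{-k\delta}(1+|I|)\le C_\delta(1+|I|)$.

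\textbf{Main obstacle.} The delicate step is the regime $\langle x\rangle\gtrsim\nu^{1/(1-\rho)}$, where the magnetic force can bend orbits, possibly into circles. There I would bound the integrand pointwise by $\nu\langle x\rangle^{-1-\delta}$ and use the inequality $\nu\langle x\rangle^{-1-\delta}\le\langle x\rangle^{-\delta-\rho}\le 1$, which gives a contribution of at most $|I|$.
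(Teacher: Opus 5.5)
Your Step 1 is the same reduction the paper uses: its proof just reruns Yajima's Lemma 2.1 with $q=x$, $v=\xi-\bm{a}(t,x)$. The problem is that Steps 3--4 do not give a bound uniform in the speed $\nu\sim|\xi|$. That uniformity is the whole content of the lemma, since Lemma~\ref{aj} applies it along $(x,\lambda\xi)$ with $\lambda\to\infty$.

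With your count of at most $1+C|I|\nu 2^{-k}$ visits to the $k$-th shell, that shell contributes $2^{-k\delta}(1+C|I|\nu 2^{-k})$. Summing over $k$ gives $C_\delta+C|I|\nu$, not $C_\delta(1+|I|)$; your final line silently replaces $\nu 2^{-k}$ by $1$.

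The convexity threshold is also miscomputed. The condition $2\nu^2-C\nu\langle x\rangle^{\rho}>0$ needs $\langle x\rangle\lesssim\nu^{1/\rho}$, not $\nu^{1/(1-\rho)}$. For $\rho>1/2$, convexity on $\nu^{1/\rho}\ll\langle x\rangle\lesssim\nu^{1/(1-\rho)}$ is therefore unjustified.

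What is missing is to use convexity on the whole ball $\{\langle x\rangle\le c\nu^{1/\rho}\}$. Here enlarge $\rho$ into $(0,1)$ and assume $\nu\ge 1$, the case $\nu<1$ being trivial.
\begin{itemize}
\item During one stay in this ball, $|x|^2$ has a single minimum at some $\tau_*$, and $|x(\tau)|\gtrsim\nu|\tau-\tau_*|$. So the stay contributes at most $\int\nu\langle\nu(\tau-\tau_*)\rangle^{-1-\delta}d\tau\le C_\delta$.
\item Except possibly for the first, a stay reaching $\{\langle x\rangle\le\frac c2\nu^{1/\rho}\}$ must travel from radius $c\nu^{1/\rho}$ to $\frac c2\nu^{1/\rho}$. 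This takes time $\gtrsim\nu^{1/\rho-1}\ge c'$, so there are at most $2+C|I|$ such stays.
\item At all remaining times $\langle x\rangle\ge\frac c2\nu^{1/\rho}$. There the correct pointwise bound is $\nu\langle x\rangle^{-1-\delta}\lesssim\langle x\rangle^{\rho-1-\delta}\le C$, not $\langle x\rangle^{-\delta-\rho}$.
\end{itemize}
No dyadic decomposition is needed.

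Step 2 cannot be left open, and it cannot be closed in terms of $v$. Assumption~\ref{A2} gives no control of $\partial_t\bm{a}$. So neither $|v|\approx\nu$ nor the identity $\frac{d^2}{d\tau^2}|x|^2=2|v|^2+2x\cdot(Bv-\partial_t\bm{a})$ is available; the paper's one-line reference does not address this either.

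The remedy is to work in canonical variables, because \eqref{ce} involves only $\bm{a}$ and $\nabla_x\bm{a}$. From $\dot\xi=(\nabla_x\bm{a})^T(\xi-\bm{a})$ you get $|\dot\xi|\le C\langle x\rangle^{\rho-1}(|\xi|+\langle x\rangle^{\rho})$. Hence on time intervals of unit length, $|\xi|$ stays comparable to a constant while $|\xi|\ge C'\langle x\rangle^{\rho}$. In that region, for $C'$ large,
\[ \frac{d}{d\tau}(x\cdot\xi)=|\xi|^2-\bm{a}\cdot\xi+x\cdot(\nabla_x\bm{a})^T(\xi-\bm{a})\ge\tfrac12|\xi|^2 . \]
Combined with $\langle x\rangle\ge|x\cdot\xi|/|\xi|$, this replaces the convexity of $|x|^2$ in the argument above. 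The region $|\xi|<C'\langle x\rangle^{\rho}$ is handled pointwise as before. Summing over unit-length subintervals of $I$ then gives the factor $1+|b-a|$.
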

The above estimate of characteristics \eqref{ce} plays a key role for the proof of our result.
The lemma is proved by putting $q(t)=x(t)$, $v(t)=\xi(t)-\bm{a}(t,x(t))$ in the proof of Lemma 2.1 in \cite{Y0}.

The following formulae are important for our proof and obtained immediately from the commutator relation $x^{\alpha}\partial_x^{\beta}e^{it\Delta/2}=e^{it\Delta/2}(x-it\nabla_x)^{\alpha}\partial_x^{\beta}$ and scaling argument (for the proof, see \cite{KI}).
\begin{lem}\label{L3}
Let $\varphi_0\in\Sh(\R^n)$, $s\in\R$, $\lambda\geq1$, $0<b\leq1$ and $\varphi_{0,\lambda}(x)=\lambda^{nb/2}\varphi_0(\lambda^bx)$.
Define $U_0(t)=e^{it\Delta/2}$ and $\varphi^{(s)}_{\lambda}(x)=U_0(s)[\varphi_{0,\lambda}](x)$.
Then, for any $\alpha, \beta\in\Z^n_{+}$, it holds that
\begin{equation}\label{a3}
x^{\alpha}\partial_x^{\beta}\varphi^{(s)}_{\lambda}(x)=\sum_{\substack{\gamma_1+\gamma_2=\alpha\\\gamma'_1\leq\gamma_1,\gamma'_2\leq\gamma_2}}s^{|\beta+\gamma|}\lambda^{b(|\beta|+|\gamma|-|\alpha|)}\left|(\varphi^{\beta,\alpha})^{(s)}_{\lambda}(x)\right|.
\end{equation}
As a consequence from the above equality, we also have the additional inequality
\begin{equation}\label{a4}
\left|(1+|x|^2)^M\varphi^{(s)}_{\lambda}(x)\right|\leq\sum_{|\beta+\gamma|\leq 2M}C_{\beta,\gamma}s^{|\gamma|}\lambda^{b(|\gamma|-|\beta|)}\left|(\varphi^{\gamma,\beta})^{(s)}_{\lambda}(x)\right|
\end{equation}
for arbitrary $M\in\N$.
\end{lem}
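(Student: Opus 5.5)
The plan is to prove \eqref{a3} first as an exact identity with finitely many explicit coefficients, and then read off \eqref{a4} from it. I prove it in the corrected form, with coefficients $C$ and the modulus placed correctly; the statement as printed appears to have typos in its indices and absolute values. There are three steps: commute the weights $x^\alpha$ through the free propagator, normal-order the result, and rescale.

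\emph{Step 1 (commutator).} Set $A_j(s)=U_0(-s)x_jU_0(s)$. Differentiating in $s$ gives
\[
\tfrac{d}{ds}A_j(s)=U_0(-s)\,\tfrac{-i}{2}[\Delta,x_j]\,U_0(s)=-i\partial_{x_j},
\]
because $[\Delta,x_j]=2\partial_{x_j}$ and $\partial_{x_j}$ commutes with $U_0$. Hence $x_jU_0(s)=U_0(s)(x_j-is\partial_{x_j})$, and $\partial_x^\beta$ commutes with $U_0(s)$. Iterating, and using that operators carrying different indices $j$ commute, we get
\[
x^\alpha\partial_x^\beta\varphi^{(s)}_\lambda=U_0(s)\Big[\prod_{j}(x_j-is\partial_{x_j})^{\alpha_j}\,\partial_x^\beta\varphi_{0,\lambda}\Big].
\]

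\emph{Step 2 (normal ordering).} This is the only step needing care, since $x_j$ and $\partial_{x_j}$ do not commute. Expand $(x_j-is\partial_{x_j})^{k}$ and move every $x_j$ to the left using $[\partial_{x_j},x_j]=1$. This writes it as a finite sum of terms $c\,s^{m}x_j^{a}\partial_{x_j}^{c}$ with $a+c\le k$. Each term carries $s^{c+e}$, where $e$ is the number of contractions, and a contraction removes one $x$ and one $\partial$ simultaneously.

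The key bookkeeping point is therefore this. After multiplying over $j$, every term has the form $C\,s^{|\gamma|+|e|}x^{\gamma'}\partial^{\gamma}\partial^\beta$. Here $\gamma'\le\gamma_1$, $\gamma\le\gamma_2$ and $\gamma_1+\gamma_2=\alpha$, the exponent $e$ records the contractions, and the difference $|\gamma|-|\gamma'|$ is unchanged by contractions.

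\emph{Step 3 (scaling).} Directly from $\varphi_{0,\lambda}(x)=\lambda^{nb/2}\varphi_0(\lambda^bx)$ we have
\[
x^{\mu}\partial^{\nu}\varphi_{0,\lambda}=\lambda^{b(|\nu|-|\mu|)}\,(x^\mu\partial^\nu\varphi_0)_\lambda .
\]
Applying $U_0(s)$ then yields
\[
x^\alpha\partial_x^\beta\varphi^{(s)}_\lambda=\sum C\,s^{|\gamma|+|e|}\lambda^{b(|\beta|+|\gamma|-|\gamma'|)}\big(\varphi_0^{\beta+\gamma,\gamma'}\big)^{(s)}_\lambda ,
\]
where $\varphi_0^{\beta',\gamma'}=x^{\gamma'}\partial^{\beta'}\varphi_0\in\Sh$. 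Taking absolute values gives \eqref{a3}, with powers of $|s|$, $\lambda^b$ weights of the displayed type, and finitely many Schwartz profiles. The $\lambda$-exponent is exactly as claimed, since each $x$ costs $\lambda^{-b}$, each $\partial$ gains $\lambda^{b}$, and contractions cancel in pairs.

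For \eqref{a4}, expand $(1+|x|^2)^M=\sum_{|\mu|\le M}c_\mu x^{2\mu}$ and apply \eqref{a3} with $\beta=0$ and $\alpha=2\mu$ to each monomial. This produces a finite sum of terms $s^{|\gamma|}\lambda^{b(|\gamma|-|\beta|)}\bigl|(\varphi_0^{\gamma,\beta})^{(s)}_\lambda\bigr|$ with $|\beta+\gamma|\le 2M$. The contraction powers $s^{|e|}$ are either absorbed by relabelling or bounded by a constant for $|s|\le t_0$. The triangle inequality then gives \eqref{a4}.

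I expect the main obstacle to be the combinatorics of Step 2: recording the indices so that the $s$-powers and $\lambda$-powers match the statement. This is purely algebraic, and the invariant ``contractions lower the $x$-degree and the $\partial$-degree equally'' settles it.
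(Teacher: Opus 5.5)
Your proposal is correct and follows the same route the paper indicates. The paper gives no proof beyond citing Kato--Ito for the commutator identity $x^{\alpha}\partial_x^{\beta}e^{is\Delta/2}=e^{is\Delta/2}(x-is\nabla_x)^{\alpha}\partial_x^{\beta}$ plus the dilation scaling $x^{\mu}\partial^{\nu}\varphi_{0,\lambda}=\lambda^{b(|\nu|-|\mu|)}(x^{\mu}\partial^{\nu}\varphi_0)_{\lambda}$, and your normal-ordering bookkeeping supplies those details. Your treatment of the extra $s^{|e|}$ works: you label terms by pre-contraction indices $(\gamma_1,\gamma_2)$ and use post-contraction profiles indexed by $\gamma_1'\le\gamma_1$, $\gamma_2'\le\gamma_2$, which is what the index set in \eqref{a3} encodes, giving the weights $s^{|\gamma_2|}\lambda^{b(|\beta|+|\gamma_2|-|\gamma_1|)}$ used in the proof of Lemma~\ref{aj}.
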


\end{document}